\documentclass[12pt]{article}

\usepackage{latexsym,amssymb,amsmath}
\pagestyle{headings}

\newcommand{\C}{\mathbb C}
\newcommand{\z}{\mathbb Z}

\newcommand{\n}{\mathbb N}
\newtheorem{lem}{Lemma}[section]

\newtheorem{ex}[lem]{Example}
\newtheorem{co}[lem]{Corollary}
\newtheorem{thm}[lem]{Theorem}
\newtheorem{prop}[lem]{Proposition}

\newenvironment{proof}{\textbf{Proof.}}{\newline\hspace*{\fill}{$\Box$}\\}

\begin{document}
\title{Virtual finite quotients of finitely generated groups}
\author{J.\,O.\,Button\\
Selwyn College\\
University of Cambridge\\
Cambridge CB3 9DQ\\
U.K.\\
\texttt{jb128@dpmms.cam.ac.uk}}
\date{}
\maketitle
\begin{abstract}
If $G$ is a semidirect product $N\rtimes H$ with $N$ finitely generated
then $G$ has the property that every finite group is a quotient of some
finite index subgroup of $G$ if and only if one of $N$ and $H$ has this
property. This has applications to 3-manifolds and to cyclically 
presented groups, for instance for any fibred hyperbolic 3-manifold
$M$ and any finite simple group $S$, there is a cyclic cover of $M$ whose
fundamental group surjects to $S$. We also give a short proof of the
residual finiteness of ascending HNN extensions of finite rank free
groups when the induced map on homology is injective.  
\end{abstract}
\section{Introduction}

One means of studying a finitely generated group $G$ is to examine the
set ${\cal F}(G)$ consisting of the finite quotients of $G$, as is done
when taking the profinite completion of $G$. Even if $G$ is also a residually
finite group, this might not give us the full picture. For instance it is
unknown whether the following is true (this is problem (F14) in \cite{ny}):
if there is $n\ge 2$ such that the residually finite, finitely generated
group $G$ has ${\cal F}(G)$ consisting of all $n$-generator finite groups, 
then $G$ is isomorphic to the free group $F_n$ of rank $n$.

If a finitely generated group $G$ has many finite quotients then it has
many finite index subgroups too and we can consider ${\cal F}(H)$ for
any $H$ of finite index in $G$. It is the case that ${\cal F}(G)$ and
${\cal F}(H)$ might look rather different, for instance if $G$ is a
perfect group (one which is equal to its commutator subgroup $[G,G]$)
then there will be no non-trivial $p$-groups in ${\cal F}(G)$ but there
could be a complex collection of $p$-groups in ${\cal F}(H)$ for many
primes $p$. This can happen for the fundamental group of a closed hyperbolic
3-manifold.

In this paper we are interested in the question of which finitely generated
groups $G$ have the property that the union of ${\cal F}(H)$ over the
finite index subgroups $H$ of $G$ consists of all finite groups. It is
clear that there are such groups, for instance non abelian free groups
or anything that surjects onto one of these groups. Some other examples
were given in \cite{lr} where it was shown that this property holds for
any finitely generated LERF group (one where every finitely generated 
subgroup is the intersection of finite index subgroups) containing a
non abelian free group. They call our property ``having every finite
group as a virtual quotient''. Moreover various consequences were given
in \cite{lubseg}, which collects together a large number of results on
subgroup growth. In Chapter 3 of this book it is mentioned that our
property, here called ``having every finite group as an upper section'',
holds for finitely generated groups with superexponential subgroup
growth, and also with superpolynomial maximal subgroup growth.

We first find many more examples of finitely generated groups having every
finite group as a virtual quotient by showing in Section 2
that if $S\le G$ then the set of finite quotients ${\cal F}(S)$ is 
contained in the union of ${\cal F}(H)$, where $H$ varies over all finite 
index subgroups of $G$, provided the following holds: whenever $K$ is
normal in $S$ with finite index, there exists a finite index subgroup
$L$ of $G$ with $L\cap S=K$. This is straightforward if we impose that
$L$ is normal in $G$ but our generalisation holds by adapting an
argument in \cite{lr}. As we expect there to be many more finite index
subgroups of $G$ than finite index normal subgroups, we can exploit this
in Section 3 where we examine semidirect products $G=N\rtimes H$, with
$N$ finitely generated. We show that $G$ has every finite
group as a virtual quotient if and only if one of $N$ and $H$ does
(but this need not be true if $N$ is infinitely generated). Thus we can
build up many groups with this property by taking repeated semidirect
products of finitely generated groups
as long as we merely ensure that one of the factors has the
property.

In Section 4 we observe that if $G=N\rtimes H$ for a finitely generated
$N$ which surjects to the finite group $F$ then $N$ is
contained in the finite index subgroup of $G$ which
surjects to $F$ as given by Section 2. We present a simple alternative
proof of this and apply it to semidirect products of the form
$G=N\rtimes\z$. We have that ${\cal F}(N)$ is contained amongst the
union of ${\cal F}(G_n)$, where the $G_n$ are the finite cyclic covers
of $G$. This has applications for closed or finite volume hyperbolic
3-manifolds $M$ which are fibred, such as every 2-generated finite
group (in particular every finite simple group) is a quotient of a
cyclic cover of $\pi_1(M)$. Also the fundamental group of any
virtually fibred hyperbolic 3-manifold 
(it is an open question whether all such
3-manifolds are) has every finite group as a virtual quotient.

We look at cyclically presented groups in Section 5. These are formed
by taking any word $w$ in the free group $F_d$ and obtaining the
group $G_d(w)$ from the
$d$ generator $d$ relator presentation where we take the images of $w$
on cyclically permuting the $d$ generators. These groups have appeared
a lot in the literature and, as we can regard $w\in F_d$ to be in
$F_n$ for $n$ at least $d$, we have an infinite family $G_n(w)$ of
groups on fixing $w$ but varying $n$. We can then ask which group
theoretic properties hold for infinitely many $G_n(w)$ in the family.
By adapting the result in Section 4, we show that given any finite
list of finite simple groups, there exist infinitely many $n$ such that
$G_n(w)$ surjects onto all groups in this list, provided that $w$ comes from
a free-by-cyclic word of rank at least 2. This condition is interpreted
as follows: there is a canonical finite cyclic extension 
$H_n=G_n(w)\rtimes C_n$ for the cyclic group $C_n$ of order $n$ and
$H_n=\langle x,t|r(x,t),t^n\rangle$ with $r\in F_2$ not depending on $n$
and having 0 exponent sum in $t$.
To say $w$ comes from a free by cyclic word means that 
$\langle x,t|r(x,t)\rangle$ has kernel equal to a free group (of finite rank)
of the homomorphism which is the exponent sum of $t$. There are many
free-by-cyclic words and it is very efficient to check whether this
condition holds.

In the last section we consider ascending HNN extensions. The reason for
this is that groups of the form $G=N\rtimes_\alpha\z$ can be formed using an
automorphism $\alpha$ of $N$, whereas an ascending HNN extension
$G=N*_\theta$ generalises this by allowing $\theta$ to be an injective 
endomorphism. We might hope that similar results on virtual finite quotients
hold in this case as well, however we first have to recognise as shown in
\cite{spws} that if $N$ is finitely generated and residually finite
then $G=N*_\theta$ need not be residually finite, in contrast to
semidirect products $G=N\rtimes_\alpha\z$. We adapt their construction
slightly to obtain an example where the only finite quotients of
$G=N*_\theta$ are cyclic. However it was shown in \cite{brsp} using
deep results in algebraic geometry that we do have residual finiteness
when the base $N$ of $N*_\theta$ is a finitely generated free group $F_r$. 
We finish by presenting an elementary proof of this in a special case:
when the map $\theta$ induces an injective homomorphism on the
abelianisation $F_r/[F_r,F_r]$. The proof generalises to an ascending
HNN extension of any finitely generated group $N$ having a prime $p$ such that
$N$ is residually finite $p$ and the homomorphism that $\theta$ induces on the
$p$-abelianisation $N/N^p[N,N]$ is invertible.    

\section{Virtual finite images}

If a group is generated by $n$ elements then any quotient
has this property too, thus no finitely generated group can surject
to all finite groups. However we may ask if every finite group is a
virtual image of a given finitely generated group $G$: this means that for
any finite group $F$ there is a finite index subgroup $H$ of $G$ (for which
we write $H\leq_f G$) with $H$ surjecting to $F$. For instance the free
group $F_n$ of rank $n$ has this property when $n\geq 2$. Other examples
are large groups, where $G$ is large if there is a finite index subgroup
of $G$ which surjects to $F_2$. These include surface groups $\pi_1(S_g)$,
where $S_g$ is the orientable surface of genus $g\ge 2$, and (non abelian)
limit groups. This definition of large
comes from \cite{pr}, where a ``large''
property of groups is defined to be an abstract group property $P$ such that
if $H$ has $P$ and $G$ surjects to $H$ then $G$ has $P$, and if $H\le_f G$ 
then $H$ has $P$ if and only if $G$ has $P$. It is
shown there that if $P$ is a ``large'' property satisfied by one finitely
generated group then any large finitely generated group must have $P$.
\begin{prop}
Having every finite group as a virtual image is a ``large'' property.
\end{prop}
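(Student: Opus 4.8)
The plan is to check the two defining clauses of a ``large'' property for $P = $ ``having every finite group as a virtual image''. Two of the three required implications are immediate. If $\pi\colon G\twoheadrightarrow H$ is onto and $H$ has $P$, then given a finite group $F$ pick $K\leq_f H$ with $K\twoheadrightarrow F$; now $\pi^{-1}(K)\leq_f G$ (its index is $[H:K]$) and $\pi^{-1}(K)\twoheadrightarrow K\twoheadrightarrow F$, so $G$ has $P$. Likewise if $H\leq_f G$ and $H$ has $P$, then any $K\leq_f H$ surjecting onto $F$ already has $[G:K]=[G:H][H:K]<\infty$, so $K\leq_f G$ witnesses that $G$ has $P$. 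The remaining, and only substantial, point is that $H\leq_f G$ together with $G$ having $P$ forces $H$ to have $P$.

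For this I would first reduce to the case that $H$ is normal in $G$: replacing $H$ by its normal core $N$ (so $N\trianglelefteq G$, $[G:N]<\infty$, $N\leq H$), it suffices to prove $N$ has $P$, since then the immediate implication above applied to $N\leq_f H$ gives that $H$ has $P$. So assume $N\trianglelefteq G$ with $Q:=G/N$ finite of order $m$, let $q\colon G\twoheadrightarrow Q$ be the quotient map, and fix an arbitrary finite group $F$; the goal is a finite index subgroup of $N$ surjecting onto $F$.

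The heart of the argument is a wreath product and equalizer trick. Let $T=F\wr Q=F^{Q}\rtimes Q$ be the regular wreath product, with $Q$ permuting the $m$ coordinates of $F^{Q}$ freely and transitively, and $p\colon T\twoheadrightarrow Q$ the projection. Since $G$ has $P$, choose $L\leq_f G$ and a surjection $\phi\colon L\twoheadrightarrow T$. Both $p\phi$ and $q|_{L}$ are homomorphisms $L\to Q$, and I would pass to their equalizer $E=\{\ell\in L: p\phi(\ell)=q(\ell)\}$, which is the preimage of the diagonal under $(p\phi,q)\colon L\to Q\times Q$ and so has index at most $m$ in $L$; thus $E\leq_f G$ and $E\cap N\leq_f N$. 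On $E$ we have $q=p\phi$, hence $E\cap N=E\cap\ker q=E\cap\ker(p\phi)=E\cap\phi^{-1}(F^{Q})$, so $\phi$ carries $E\cap N$ into the base group $F^{Q}$; a short computation identifies $\phi(E\cap N)$ with $W:=V\cap F^{Q}$, where $V:=\phi(L\cap N)$. Here $L\cap N\trianglelefteq L$ and $\phi$ is onto, so $V\trianglelefteq T$, and $T/V$ is a quotient of $L/(L\cap N)\cong LN/N\leq Q$, whence $[T:V]\leq m$ and therefore $[F^{Q}:W]\leq m$. Since $W$ is $Q$-invariant and $Q$ permutes the coordinates transitively, all coordinate projections $F^{Q}\to F$ send $W$ onto one and the same subgroup $F_{1}\leq F$; then $W\leq F_{1}^{Q}$ gives $[F^{Q}:W]\geq[F:F_{1}]^{m}$, and as $[F^{Q}:W]\leq m<2^{m}$ this forces $F_{1}=F$. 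Hence $E\cap N$ is a finite index subgroup of $N$ that surjects onto $F$ (via any coordinate projection composed with $\phi$), and as $F$ was arbitrary $N$ has $P$.

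The main obstacle is precisely this finite index inheritance direction; within it the decisive observation is that the image of $E\cap N$ in the base group $F^{Q}$, though a priori of unknown size, is $Q$-invariant and hence cannot have index less than $2^{m}$ unless it is everything, which the bound $[F^{Q}:W]\leq m$ then rules out. The reduction to $N$ normal and the passage to the equalizer are the devices that bring the hypothesis on $G$ to bear on $N$, and the remaining two clauses of largeness are routine as indicated above.
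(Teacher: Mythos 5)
Your proof is correct, but for the substantial direction (finite-index subgroups inherit the property) it takes a genuinely different route from the paper. The paper avoids the normal-core reduction and the wreath product entirely: given $[G:H]=n$ and a finite group $F$, it picks $k$ with $2^k>n$, takes $L\le_f G$ with a surjection $\theta\colon L\twoheadrightarrow F\times\cdots\times F$ ($k$ copies), sets $A=L\cap H$, and observes $[\theta(L):\theta(A)]\le[L:A]\le n<2^k$; if every coordinate projection of $\theta(A)$ were a proper subgroup of $F$ the index would be at least $2^k$, a contradiction, so some $\pi_i\theta\colon A\twoheadrightarrow F$. Your argument replaces the plain direct power by the regular wreath product $F\wr Q$, reduces to $N\trianglelefteq G$, and uses the equalizer of $p\phi$ and $q$ to force $\phi(E\cap N)$ into the base group; the $Q$-invariance of $W=V\cap F^Q$ then makes all coordinate projections agree, and the same pigeonhole bound $m<2^m$ finishes. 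So the decisive step in both proofs is the same observation that a subgroup of $F^m$ of index $<2^m$ must project onto some (equivalently, in your normal/invariant setting, every) factor; what differs is the framing. The paper's version is leaner — no normal core, no wreath product, no equalizer — and in exchange your version makes the $Q$-action on coordinates explicit (which automatically forces all the projections to coincide) and pins down precisely which subgroup of $N$ does the job. Both are valid; the paper's is the shorter path to the same pigeonhole.
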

\begin{proof}
We just need to show that if $H\le_f G$ and $G$ has every finite group
as a virtual image then so does $H$.

Suppose that $[G:H]=n$. Given a finite group $F$, we have $L\le_f G$
with a homomorphism $\theta$ from $L$ to
the direct product $F\times\ldots\times F$
of $k$ copies of $F$, where $2^k>n$. Now if $A=L\cap H$ then
$[L:A]\le n$ and moreover $\theta(A)$ also has index at most $n$ in
$F\times\ldots\times F$. Now consider the projection $\pi_i$ from
$F\times\ldots\times F$ to the $i$th factor. If $\pi_i(\theta(A))=F$ for
any $i$ then we are done because $A\le_f H$,
but if not we have that $|\pi_i(\theta(A))|\le |F|/2$.
This means that at most half the elements of $F$ appear in the $i$th
coordinate of the image of $A$ under $\theta$. But if this is true for all
$i$ then $|\theta(A)|\le (|F|/2)^k$ so that $\theta(A)$ has index at least
$2^k$ in $F\times\ldots\times F$, which is a contradiction.
\end{proof}
Thus this implies that
a large finitely generated group has every finite group as a virtual
image, although we noted already that it is easy to see directly.

This property is also considered in \cite{lubseg} Chapter 3
under the title of having
every finite group as an upper section (where a section of $G$ is a quotient
$H/N$ of a subgroup $H$ of $G$, and upper means that $N$, hence $H$, has
finite index in $G$). We will use the two phrases interchangeably throughout.
Theorem 3.1 of this book states that if a finitely
generated group $G$ does not have every finite
group as an upper section then $G$ can have at most exponential
subgroup growth type, whereas free groups and large groups have 
superexponential subgroup growth of type $n^n$. To define these terms,
let $a_n(G)$ be the number of index $n$ subgroups of $G$ and $s_n(G)=
a_1(G)+\ldots +a_n(G)$. If $G$ is finitely generated then $a_n(G)$ is
finite for all $n\in\n$. We say that
$G$ has exponential subgroup growth type if
there exist $a,b>0$ such that $s_n(G)\le e^{an}$ for all large $n$ and
$s_n\ge e^{bn}$ for infinitely many $n$ (and more generally
growth of type $f(n)$ by replacing $e^n$ with $f(n)$).

Thus having subgroup growth type which is bigger than exponential,
meaning that $\mbox{lim sup}(\frac{\mbox{log} s_n(G)}{n})$ is infinite,
is a major restriction as it implies that every finite group is an upper
section. However it is also shown in this book that there exist finitely
generated groups with exponential or slower subgroup growth type which
have every finite group as an upper section. This is done in Chapter 13
Section 2 by considering profinite groups which are the product of various
alternating groups and then taking finitely generated dense subgroups.

We can also count subgroups with a specific property, such as being
maximal, normal or subnormal. Another related result, which is Theorem
3.5 (i) in the same book, states that if $G$ does not have 
every finite group as an upper section then $G$ has polynomial maximal
subgroup growth. Again the converse is not true, with similar examples
verifying this.

Suppose that $H$ is a subgroup of $G$ (with both finitely generated) and
$H$ has every finite group as an upper section. If $H$ has infinite index
in $G$ then one would not expect that this property would transfer across
to $G$, for instance if $G$ is an infinite simple group containing a
non abelian free
subgroup then $G$ has no finite images at all except for the
trivial group $I=\{e\}$.
However there is one obvious situation when this can be
done.
\begin{prop} Suppose that $H\leq G$ and for any finite index normal
subgroup $K$ of $H$ there exists a finite index normal subgroup $N$ of
$G$ such that $H\cap N=K$. If $H$ surjects to the
finite group $F$ then $G$ has $F$ as a virtual image.
\end{prop}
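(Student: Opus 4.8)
The plan is to produce the required finite index subgroup of $G$ explicitly as an internal product, exploiting the normality built into the hypothesis. First I would let $\phi\colon H\to F$ be the given surjection and set $K=\ker\phi$, so that $K$ is a normal subgroup of $H$ of finite index equal to $|F|$. Applying the hypothesis to this particular $K$ yields a finite index normal subgroup $N$ of $G$ with $H\cap N=K$.

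Next I would form $L=HN=\{hn:h\in H,\ n\in N\}$. Because $N$ is normal in the whole of $G$ (not merely in $H$), the set $L$ is genuinely a subgroup of $G$; and since $N\subseteq L\subseteq G$ with $N$ of finite index, we get $[G:L]\le[G:N]<\infty$, so $L\le_f G$. It then remains only to exhibit a surjection from $L$ onto $F$, and for that I would invoke the second isomorphism theorem: $L/N=HN/N\cong H/(H\cap N)=H/K$, while $H/K\cong F$ by the first isomorphism theorem applied to $\phi$. Composing the quotient map $L\to L/N$ with these two identifications gives a surjection $L\to F$ with $L$ of finite index in $G$, which is exactly the assertion that $G$ has $F$ as a virtual image.

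There is essentially no obstacle here once the normality of $N$ in $G$ is used: that is precisely what makes $HN$ a subgroup and lets the isomorphism theorems do all the work. The genuine content, foreshadowed in the introduction, is the corresponding statement in which $N$ is only required to satisfy $H\cap N=K$ without being normal in $G$ — there $HN$ need not be a subgroup at all, and one must instead adapt the coset-counting argument of \cite{lr}. The reason for isolating this easy proposition is that in the semidirect product setting of Section 3 the supply of finite index \emph{normal} subgroups is already abundant, so this version suffices for those applications.
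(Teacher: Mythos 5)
Your proof is correct and follows exactly the same route as the paper's one-line argument: use the normality of $N$ in $G$ to form the subgroup $HN$, note it has finite index, and apply the second isomorphism theorem to get $HN/N\cong H/(H\cap N)=H/K\cong F$. The only difference is that you spell out the details the paper leaves implicit.
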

\begin{proof} This is simply because if $K\unlhd_f H$ with $H/K\cong F$
and $H\cap N=K$ for $N\unlhd_f G$ then $NH/N\cong H/K$ with $NH\leq_f G$.
\end{proof}
\begin{co}
Suppose $H\le G$ and $H$ has every finite group as an upper section. If for
all $M\le_f H$ and $K\unlhd_f M$ we have $L\unlhd_f G$ such that 
$L\cap M=K$ then $G$ has every finite group as an upper section.
\end{co}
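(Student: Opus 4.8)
The plan is to reduce the statement to the preceding Proposition, but applied to an arbitrary finite-index subgroup of $H$ rather than to $H$ itself. First I would fix a finite group $F$. Since $H$ has every finite group as an upper section, there is a subgroup $M\le_f H$ together with a subgroup $K$ that is normal in $M$ and of finite index in $H$ (hence $K\unlhd_f M$, since $M\le H$) such that $M/K\cong F$. In other words, $M$ is a finite-index subgroup of $H$ which surjects onto $F$ with kernel $K$.

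Next I would feed this particular pair $M,K$ into the hypothesis of the Corollary: it supplies $L\unlhd_f G$ with $L\cap M=K$. Then $L\le LM\le G$ and $L\le_f G$, so $LM\le_f G$; moreover $LM/L\cong M/(L\cap M)=M/K\cong F$. Hence $LM$ is a finite-index subgroup of $G$ surjecting onto $F$, so $F$ is a virtual image of $G$. As $F$ was an arbitrary finite group, $G$ has every finite group as a virtual image, equivalently as an upper section, which is what we want.

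The computation in the second paragraph is just the proof of the preceding Proposition with $M$ playing the role of the subgroup there, and indeed the whole argument is essentially: ``apply Proposition~2.2 to the subgroup $M\le G$''. The one point needing a little care --- and the only real obstacle --- is that $M$ need not have finite index in $G$ (only in $H$), so one cannot literally quote that Proposition with $H$ replaced by $M$. What rescues the argument is that we intersect and multiply by $L$, which is of finite index and normal in $G$: this simultaneously keeps $LM$ of finite index in $G$ and licenses the second-isomorphism identity $LM/L\cong M/(L\cap M)$. Everything else is routine bookkeeping with transitivity of finite index.
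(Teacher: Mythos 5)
Your proof is correct and follows the same route as the paper: produce $M\le_f H$, $K\unlhd_f M$ with $M/K\cong F$, feed that pair into the hypothesis to get $L\unlhd_f G$ with $L\cap M=K$, and observe $LM/L\cong M/(L\cap M)=M/K\cong F$ with $LM\le_f G$. Your stated ``only real obstacle'' --- that $M$ need not have finite index in $G$ so one cannot literally quote Proposition~2.2 with $M$ in place of $H$ --- is in fact not an obstacle, since that proposition assumes only $H\le G$, not $H\le_f G$, and so applies verbatim to the subgroup $M\le G$ once the Corollary's hypothesis is restricted to that $M$; writing out the second-isomorphism computation, as you did, is harmless and is exactly the content of the paper's proof of Proposition~2.2.
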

\begin{proof}
If we have a finite group $F$ and $M\le_f H$ with $K\unlhd_f M$ such that
$M/K\cong F$ then $LM$ also has $F$ as a finite quotient by Proposition 2.2.
\end{proof}

In \cite{lr} Theorem 2.1 states that if $H\leq G$ with $H$ having every
finite group as an upper section and $G$ is LERF, that is every
finitely generated subgroup of $G$ is an intersection of finite index
subgroups, then $G$ also has every finite group as an upper section.
However on examining the proof, it seems that the LERF property is
rather stronger than required for the result to hold and so we offer
a version of this result with essentially the same proof but with
a different hypothesis.

\begin{thm} 
Suppose that $H\leq G$ and for any finite index normal subgroup $K$ of $H$
there exists a finite index subgroup $L$ of $G$, but not necessarily
normal in $G$, such that $H\cap L=K$. If $H$ surjects to the finite group
$F$ then $G$ has $F$ as a virtual image. 
\end{thm}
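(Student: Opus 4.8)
The plan is to exploit the (possibly non-normal) finite index subgroup $L$ via the permutation action of $G$ on its left cosets, and then recover a normal subgroup by passing to the pointwise stabiliser of a suitable orbit — this is essentially the Long--Reid argument with LERF replaced by the given hypothesis.

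First I would fix $K\unlhd_f H$ with $H/K\cong F$ and invoke the hypothesis to obtain $L\leq_f G$ with $H\cap L=K$; write $n=[G:L]$. Let $G$ act on the finite set $\Omega$ of left cosets $G/L$ by left multiplication, and let $O$ be the $H$-orbit of the trivial coset $eL$. Since the stabiliser of $eL$ in $H$ is $H\cap L=K$, the orbit $O$ has size $[H:K]=|F|$, and the action of $H$ on $O$ is equivalent to the left multiplication action of $H$ on $H/K$.

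Next consider the setwise stabiliser $P=\{g\in G:gO=O\}$ of $O$ in $G$. The map $g\mapsto gO$ sends $G$ into the finite set of $|F|$-element subsets of $\Omega$ and its fibres are left cosets of $P$, so $[G:P]\leq\binom{n}{|F|}<\infty$; moreover $H\leq P$ since $O$ is $H$-invariant. Let $Q\unlhd P$ be the kernel of the action of $P$ on $O$, i.e.\ the pointwise stabiliser of $O$ in $P$; since $P/Q$ embeds in the symmetric group on $O$ we get $[P:Q]\leq|F|!$, hence $[G:Q]$ and a fortiori $[G:HQ]$ are finite.

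The key computation is $H\cap Q=K$. An element of $H$ fixing $O$ pointwise in particular fixes $eL$, hence lies in $H\cap L=K$; conversely $K$ fixes $O$ pointwise precisely because $K$ is \emph{normal} in $H$, as for $h\in H$ and $k\in K$ one has $khL=h(h^{-1}kh)L=hL$ using $h^{-1}kh\in K\leq L$. This is the only place normality of $K$ in $H$ enters, and it is the crux of the argument. Finally $HQ$ is a subgroup of $P$ containing $Q$ as a normal subgroup, $[G:HQ]<\infty$, and by the second isomorphism theorem $HQ/Q\cong H/(H\cap Q)=H/K\cong F$; thus $HQ\leq_f G$ surjects onto $F$, as required. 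I do not anticipate a serious obstacle: the only point needing care is that $[G:P]$ is finite, which follows purely from $\Omega$ being finite, and that $Q$ is normal in $HQ$, which is immediate from $Q\unlhd P$.
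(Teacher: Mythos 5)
Your proof is correct and is essentially the same argument as the paper's, recast in the language of group actions: your pointwise stabiliser $Q$ of the $H$-orbit $O$ is precisely the paper's $\Delta=\bigcap_{h\in H}hLh^{-1}$ (an element $g$ fixes $hL$ iff $g\in hLh^{-1}$), and your $HQ$ is the paper's $\Delta H$, with the same identification $H\cap Q=K$ hinging on normality of $K$ in $H$. The only cosmetic difference is your intermediate setwise stabiliser $P$, which you use to see that $[G:Q]$ is finite; the paper gets the same conclusion directly from $L\leq_f G$.
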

\begin{proof}
On being given $K\unlhd_f H$ with $H/K\cong F$
and $L\leq_f G$ with
$L\cap H=K$ we let $\Delta$ be the intersection of $hLh^{-1}$ over all
$h\in H$ (so that if $HL$ is a subgroup of $G$, which need not be the
case, then $\Delta$ is the core of $L$ in $HL$). As $\Delta$ is a subgroup,
it is invariant under conjugation by its own elements, and by elements of
$H$ too as this just permutes the terms in the intersection. Thus
$\Delta$ is normal in the subgroup generated by $\Delta$ and $H$, which
therefore is $\Delta H$. We have $\Delta H/\Delta\cong H/(\Delta\cap H)$
and we now show that $\Delta\cap H=K$: certainly $\Delta\leq L$ so
$\Delta\cap H\leq L\cap H$. Conversely we need to show that for any
$h\in H$ and $k\in K$ we have $h^{-1}kh\in L$, but this is true because
$K$ is normal in $H$ and is contained in $L$. Finally $\Delta\leq_f G$
because $L\leq_f G$ implies that $L\cap H\leq_f H$, with the elements
of $L\cap H$ conjugating $L$ to itself.
\end{proof}
\begin{co}
Suppose $H\le G$ and $H$ has every finite group as an upper section. If for
all $M\le_f H$ and $K\unlhd_f M$ we have $L\le_f G$ such that
$L\cap M=K$ then $G$ has every finite group as an upper section.
\end{co}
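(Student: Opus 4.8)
The plan is to mimic the proof of Corollary 2.3, but to replace the appeal to Proposition 2.2 by an appeal to Theorem 2.5 so that the weaker, non-normal, lifting hypothesis suffices. First I would fix an arbitrary finite group $F$. Since $H$ has every finite group as an upper section, there exist $M\le_f H$ and $K\unlhd_f M$ with $M/K\cong F$; this is exactly what the phrase ``every finite group as an upper section'' delivers.

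Next I would regard $M$ as a subgroup of $G$, noting that $M$ need not have finite index in $G$ (only in $H$), which is harmless because Theorem 2.5 imposes no finite-index condition on the subgroup to which it is applied. The hypothesis of the corollary, applied to this particular $M$, states that for every finite-index normal subgroup $K'$ of $M$ there is $L'\le_f G$ with $L'\cap M=K'$; in particular this holds for our chosen $K$. This is precisely the lifting condition needed to run Theorem 2.5 with $M$ in the role of the subgroup ``$H$'' appearing there, and with our finite group $F$. Applying Theorem 2.5 therefore yields a finite index subgroup of $G$ surjecting onto $F$, i.e. $G$ has $F$ as a virtual image. As $F$ was arbitrary, $G$ has every finite group as a virtual image, equivalently as an upper section.

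I do not expect any real obstacle: the substantive argument (the construction of $\Delta=\bigcap_{h\in M}hL'h^{-1}$ and the verification that $\Delta\unlhd\Delta M$, $\Delta\cap M=K$, $\Delta\le_f G$) has already been carried out inside Theorem 2.5. The only points requiring care are bookkeeping ones: keeping straight that it is $M$, not $H$, that plays the part of the subgroup in Theorem 2.5, and observing that the finite-index-in-$G$ requirement present in the Proposition 2.2 route is genuinely unnecessary here. One could even remark that the proof of Theorem 2.5 used only the single relevant normal subgroup $K$, so the full strength of the corollary's hypothesis (over all $K'\unlhd_f M$) is not actually exploited.
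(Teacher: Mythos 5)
Your proof is correct and follows exactly the paper's own route: reduce to Theorem~2.4 (which you call Theorem~2.5, a harmless numbering slip) applied with $M$ playing the role of the subgroup, in the same way that Corollary~2.3 invoked Proposition~2.2. Your closing remark that only the single pair $(M,K)$ realizing $F$ is actually used, not the full universally quantified hypothesis, is a correct and mildly useful observation.
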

\begin{proof}
This is the same proof as Corollary 2.3 but applying Theorem 2.4 instead of
Proposition 2.2.
\end{proof}

The Long-Reid result in \cite{lr} follows because if $G$ is LERF and there
is a subgroup $M$ of $G$
with $K\unlhd_f M$ such that $M/K$ is the finite group
$F$ then we can find $L\le_f G$ with $L\cap M=K$. This is done by taking
coset representatives $e=m_1,m_2,\ldots ,m_n$ of $K$ in $M$ and finding
$L_i\le_f G$ for $2\le i\le n$ with $K\le L_i$ but $m_i\notin L_i$,
then intersecting the $L_i$.

\section{Semidirect Products}

In order to apply Corollary 2.5, one needs a wide class of groups where
not all members are (or are known to be) large or LERF. Semidirect products
provide such a class. If we have groups $N$ and $H$ with an homomorphism
$\theta:H\rightarrow \mbox{Aut}(N)$ then we can form the semidirect product
$G=N\rtimes_\theta H$ with $G/N\cong H$. Given a group $G$ we may be
able to see it as an internal semidirect product, by finding subgroups
$N$ and $H$ of $G$, with $N$ normal, where $NH=G$ and $N\cap H=I$. In this
case the homomorphism $\theta$ is given by the conjugation action of $H$
on $N$.

One nice feature of semidirect products $G=NH$ is that their subgroup structure
is not too complicated. If $L$ is a subgroup of $G$ then it might not be the
case that $L=SR$ for $S\le N$ and $R\le H$: indeed this is not even true
for direct products. 
However finite index subgroups of a semidirect product are 
``not too far away'' from having this structure.

\begin{prop} Suppose that $G=N\rtimes_\theta H$. Then
any finite index subgroup $L\le_f G$ contains a finite index subgroup of
the form $S\rtimes_\theta R$, where $S\le_f N$ and $R\le_f H$.
Moreover if $N$ is finitely generated and
we are given any finite index 
subgroup $S\le_f N$ then we can find $L\le_f G$ with $L\cap N=S$.
\end{prop}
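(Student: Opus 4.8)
The plan is to exploit the ``coordinate'' structure of $G=N\rtimes_\theta H$: every element is uniquely of the form $nh$ with $n\in N$ and $h\in H$, and $H$ itself sits inside $G$ as a (generally non-normal) subgroup complementing $N$. The key observation I would use for both parts is that a subgroup of the form $S\rtimes_\theta R$ with $S\le N$ and $R\le H$ is automatically a genuine subgroup of $G$ as soon as $R$ normalises $S$ inside $G$, in which case $\theta|_R$ preserves $S$ and the formula $(s_1h_1)(s_2h_2)=s_1\theta(h_1)(s_2)\,h_1h_2$ keeps us inside $S\rtimes_\theta R$.

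For the first assertion, given $L\le_f G$ I would set $S=L\cap N$ and $R=L\cap H$. Intersecting the finite index subgroup $L$ with the subgroups $N$ and $H$ gives $S\le_f N$ and $R\le_f H$, with both indices at most $[G:L]$. The one thing to check is that $R$ normalises $S$ in $G$: for $r\in R\subseteq H$ and $s\in S\subseteq N$ one has $rsr^{-1}=\theta(r)(s)\in N$ since $N\unlhd G$, and $rsr^{-1}\in L$ since $r,s\in L$, hence $rsr^{-1}\in L\cap N=S$. So $S\rtimes_\theta R$ is a subgroup of $G$, contained in $L$ because $S,R\subseteq L$. Its index is finite: routing through the subgroup $NR$ (which is a subgroup because $N\unlhd G$) one gets $[G:NR]=[H:R]$ and $[NR:S\rtimes_\theta R]=[N:S]$, so $[G:S\rtimes_\theta R]=[N:S][H:R]\le[G:L]^2$.

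For the second assertion I would use finite generation of $N$ exactly once, via the fact that $N$ has only finitely many subgroups of the given index $m=[N:S]$. The group $H$ permutes this finite set of subgroups through $\theta$ (equivalently by conjugation in $G$), so the stabiliser $H_S=\{h\in H:\theta(h)(S)=S\}$ has finite index in $H$ by orbit--stabiliser. Since $H_S$ normalises $S$ inside $G$, the observation above shows $L:=S\rtimes_\theta H_S$ is a subgroup of $G$ with $[G:L]=[N:S][H:H_S]<\infty$. Finally $L\cap N=S$: by uniqueness of the $N$--$H$ decomposition an element $sh\in L$ with $s\in S$ and $h\in H_S$ lies in $N$ precisely when $h=e$.

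I do not expect a serious obstacle here; the only real subtlety is to resist the instinct to look for $L$ (or $S$) normal in $G$, which would force passing to normal cores or characteristic subgroups and lose control of $L\cap N$, and instead to use the honest complement $H\le G$ together with the ``$R$ normalises $S\Rightarrow S\rtimes_\theta R\le G$'' remark. The one point where something could fail is the finiteness of $[H:H_S]$, and this is exactly where finite generation of $N$ — giving $H$ only finitely many index-$m$ subgroups of $N$ to permute — is essential; without it the second statement genuinely breaks down.
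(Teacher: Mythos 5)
Your proof is correct and follows essentially the same approach as the paper: for the first part you take $S=L\cap N$, $R=L\cap H$ and check $R$ normalises $S$, and for the second you use finite generation of $N$ to give $H$ a finite orbit of index-$m$ subgroups and pass to the stabiliser. The only cosmetic difference is that you compute $[G:S\rtimes_\theta R]=[N:S][H:R]$ directly via the intermediate subgroup $NR$, whereas the paper first shrinks $L$ to a normal subgroup of $G$ before the coset count; both are fine.
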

\begin{proof}
Given $L\le_f G$ we have that $S=L\cap N$ has finite index in $N$
and is also normal in $L$. On setting  $R=L\cap H$ 
we have that $S$ is preserved under conjugation by $R$ so $SR$ is the
subgroup generated by $S$ and $R$ with $S\unlhd SR$, thus $SR$ is also
a semidirect product. Also $SR$ has finite index in $G$: for this we
can assume that $L\unlhd G$ by replacing $L$ with a smaller finite index
subgroup which will only reduce $SR$.
Then on taking left coset representatives $n_i$ for $S$ in $N$ and
$h_j$ for $R$ in $H$, we have that any $g\in G$ is equal to $nh$ for
$n\in N$ and $h\in H$, thus also equal to $n_ish_jr$ for $s\in S, r\in R$.
But this is equal to $n_ih_js'r\in n_ih_jSR$ because here $S\unlhd G$.

Now suppose we have $S$ with $[N:S]=i$ and note that the set 
${\cal S}_i$ of index $i$ subgroups 
of $N$ is finite because $N$ is finitely generated. As $H$
acts on ${\cal S}_i$ by conjugation, the stabiliser $R$ of $S$ in $H$ has
finite index in $H$ and we can form the finite index subgroup
$L=S\rtimes_\theta R$ of $G$ where we restrict $\theta$ from $R$ to 
$\mbox{Aut}(S)$. But if $g\in SR\cap N$, so that we have respective elements
$s,r,n$ with $g=sr=n$ then $r\in N\cap R=I$, thus $g\in S$.
\end{proof}

If $G=N\rtimes H$ and $G$ is finitely generated then so is $H$ as it is
a quotient of $G$. However this need not imply that $N$ is finitely 
generated, so our main interest will be in semidirect products where 
$H$ and $N$ (hence $G$) are finitely
generated. If $H$ has every finite group as a virtual quotient then
so does $G$ (indeed this applies if $H$ is merely a quotient of $G$, by the
correspondence theorem). However we can now prove the more
surprising fact that the same is true with $N$ and $H$ swapped.
\begin{co} If $G=N\rtimes H$ with $N$ finitely generated,
and $N$ has every finite group as a virtual quotient then so does $G$.
\end{co}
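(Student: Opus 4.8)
The plan is to apply Corollary 2.5 with the roles of the groups set up so that $N$ plays the part of the subgroup $H$ in that corollary. So I take the embedded copy of $N$ inside $G=N\rtimes H$ and I need to verify the hypothesis: for every $M\le_f N$ and every $K\unlhd_f M$, there exists $L\le_f G$ with $L\cap M=K$. Since $N$ has every finite group as a virtual quotient and $N\le G$, Corollary 2.5 will then immediately give that $G$ has every finite group as a virtual quotient.

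First I would reduce to the case $M=N$. Given $M\le_f N$ with $K\unlhd_f M$, note that $M$ itself is finitely generated (being finite index in the finitely generated $N$), so $K$ is a finite index subgroup of the finitely generated group $N$; thus it suffices to produce, for any finite index subgroup $K$ of $N$ which happens to be normal in some intermediate $M$, a finite index subgroup $L$ of $G$ meeting $N$ in exactly $K$. In fact the cleanest route is: it is enough to find, for every $K\le_f N$, a subgroup $L\le_f G$ with $L\cap N=K$ — and this is precisely the second assertion of Proposition 3.1, which requires only that $N$ be finitely generated. So the required hypothesis of Corollary 2.5 holds verbatim.

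The key steps in order are therefore: (1) observe $N$ embeds in $G$ as the normal factor; (2) take any $M\le_f N$ and $K\unlhd_f M$, and note $K\le_f N$ with $N$ finitely generated; (3) invoke the second half of Proposition 3.1 to get $L\le_f G$ with $L\cap N=K$, whence $L\cap M=(L\cap N)\cap M=K\cap M=K$; (4) conclude by Corollary 2.5 that $G$ has every finite group as an upper section, i.e. as a virtual quotient.

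I do not expect a genuine obstacle here, since all the technical work has been front-loaded into Proposition 3.1 and Corollary 2.5. The one point that needs a moment's care is that Corollary 2.5 is phrased in terms of $M\le_f H$ and $K\unlhd_f M$ rather than just $K\le_f H$, so I should make sure the reduction in step (2)–(3) really does land us in the situation Proposition 3.1 addresses; but since $L\cap N=K$ forces $L\cap M=K$ automatically (as $K\subseteq M\subseteq N$), there is nothing subtle. If anything is mildly delicate it is only the bookkeeping that $G$, $N$, $H$ are all finitely generated so that "finite index $\Rightarrow$ finitely generated" is available at each stage, which is noted in the paragraph preceding the corollary.
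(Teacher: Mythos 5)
Your proof is correct, and it uses the same two ingredients as the paper (the second half of Proposition 3.1 together with Corollary 2.5), but with a small and genuine simplification. The paper's proof, on being given $M\le_f N$ and $K\unlhd_f M$, first applies the construction in Proposition 3.1 with $S=M$ to obtain $R\le_f H$ making $MR=M\rtimes R$ a finite index semidirect product, and then applies Proposition 3.1 a second time to $M\rtimes R$ with the finite index subgroup $K$ of $M$, getting $L\le_f MR\le_f G$ with $L\cap M=K$. You instead observe that $K\le_f N$ directly (since $K\le_f M\le_f N$), apply Proposition 3.1 once to $G=N\rtimes H$ with $S=K$ to get $L\le_f G$ with $L\cap N=K$, and then note that $L\cap M=(L\cap N)\cap M=K\cap M=K$ because $K\subseteq M\subseteq N$. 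This one-step version is cleaner: it dispenses with the intermediate semidirect product $M\rtimes R$ entirely and shows that the only thing the $M$-$K$ bookkeeping in Corollary 2.5 requires is that intersections with the larger normal factor $N$ can be prescribed, which Proposition 3.1 gives at once.
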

\begin{proof}
We need to show that the conditions of Corollary 2.5 are satisfied, where
in the hypothesis $H$ has now become $N$. Given
$M\le_f N$ and $K\unlhd_f M$, we can find $R\le_f H$ such that $MR$ is also
a semidirect product by the second part of the proof of Proposition 3.1.
Now on applying this proposition again to $MR=M\rtimes R$, 
we obtain $L\le_fMR\le_f G$ with $L\cap M=K$
because $M$ is finitely generated too.  
\end{proof}

Notes: (1) We certainly need $N$ to be finitely generated in Corollary
3.2 as the example in \cite{bminf} is a semidirect product
$G=F_\infty\rtimes\z$ where $F_\infty$ is a free group of infinite
rank but the only finite quotients of $G$,
and of its finite index subgroups, are cyclic.\\
(2) Although the conditions of Corollary 2.5 are satisfied for semidirect
products $N\rtimes H$ when $N$ is finitely generated,
we remark that the conditions in Corollary 2.3 need not be.
For instance, take $H=F_n$ with $G=F_n\rtimes_\theta\z$, $M=H$ and $K$ an
index 2 subgroup of $H$. If there is $L$ normal in $G$ with $L\cap H=K$
then $L\cap H$ is the intersection of normal subgroups and so is normal
in $G$ too. This means that $tKt^{-1}=K$ where $t\in G$ generates the
factor $\z$, thus forcing $\theta(K)=K$ which need not be the case.

We now say a few words on largeness and LERF of semidirect products as
in the statement of Corollary 3.2, so that $G=N\rtimes H$ with both
factors finitely generated and $N$ has every finite group as a virtual
quotient. First if we have a direct product
$G=N\times H$, or if $H$ is finite so that $N\unlhd_f G$, then the
corollary says nothing new. For a direct product $G$, if one factor is large
or both are LERF then $G$ is large or LERF respectively. Moreover if $H$
is finite then $G$ is large or LERF if and only if $N$ is large or LERF
respectively.

Now suppose that $H$ is not normal in $G$ (which is equivalent
to $G=N\rtimes H$ not being the direct product of $N$ and $H$) but
is infinite. Let us take $H$ to be the smallest infinite group $\z$ and
$N$ to be a group known to have all finite groups as virtual images. 
If $N$ is the
free group $F_n$ for $n\ge 2$ then $F_n\rtimes\z$ need not be LERF in
general by \cite{burks}, and it is known to be large if it contains
$\z\times\z$ by \cite{bis} but otherwise this is open. 
Similarly if $N=\pi_1(S_g)$
for $S_g$ the closed orientable surface of genus $g\ge 2$ then
$\pi_1(S_g)\rtimes\z$ need not be LERF in general (for instance one can
put together two copies of the above example for $F_n\rtimes\z$ so that
the resulting group contains subgroups which are not LERF), and
although some groups of this form have been proved large by geometric
considerations, the question of whether all of these groups are large is 
very much open
too. Thus Corollary 3.2 tells us that all groups of the form
$F_n\rtimes\z$ or $\pi_1(S_g)\rtimes\z$ have every finite group as a
virtual quotient.   

We now look at the reverse situation where $G=N\rtimes H$ and $G$ has every 
finite group as an upper section, to see what this implies for $N$ or $H$.

\begin{lem} A group $G$ has every finite group as an upper section if and
only if it has infinitely many distinct alternating groups $A_n$ as upper
sections.
\end{lem}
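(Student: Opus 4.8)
The plan is to dispatch the easy direction first: if $G$ has every finite group as an upper section then in particular every alternating group $A_n$ is an upper section, giving infinitely many distinct ones. The content is the converse, so assume $G$ has infinitely many distinct $A_n$ as upper sections; since distinct alternating groups have distinct (and unbounded) degrees, this means that for arbitrarily large $n$ the group $A_n$ occurs as an upper section of $G$.

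I would then record two standard embedding facts about alternating groups. First, any finite group $F$ embeds in the symmetric group $S_{|F|}$ by Cayley's theorem, and $S_m$ embeds in $A_{m+2}$ (extend by the transposition $(m+1,m+2)$ on the two extra letters precisely for the odd permutations), so every finite group embeds in some $A_m$. Second, $A_m \le A_n$ whenever $m \le n$, by letting $A_m$ act on the first $m$ of the $n$ points and fixing the rest. Hence every finite group $F$ is isomorphic to a subgroup of $A_n$ for all sufficiently large $n$.

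The remaining ingredient is that the collection of upper sections of $G$ is closed under passing to subgroups. Indeed, if a group $A$ is an upper section of $G$, say $A \cong L/M$ with $M \unlhd L$ and $[G:M]$ finite, and $B \le A$, then the preimage $H$ of $B$ under the quotient map $L \to L/M$ satisfies $M \unlhd H$, $H/M \cong B$, and $[G:H] = [G:L]\,[A:B] < \infty$; so $B$ is again an upper section. Combining everything: given an arbitrary finite $F$, pick $m$ with $F$ embeddable in $A_m$, then pick $n \ge m$ with $A_n$ an upper section of $G$; since $F$ is isomorphic to a subgroup of $A_m \le A_n$, it follows that $F$ is an upper section of $G$.

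I do not expect a genuine obstacle here: the argument is essentially bookkeeping combined with the well-known universality of alternating groups among finite groups. The one step that deserves to be spelled out rather than asserted is the closure of upper sections under subgroups, which is just pulling a subgroup back along the defining quotient map and checking the index stays finite; the use of the hypothesis is solely that "infinitely many distinct" supplies an $A_n$ of large enough degree to house any prescribed $F$.
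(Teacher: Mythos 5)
Your proof is correct and follows essentially the same route as the paper's: embed an arbitrary finite $F$ into $A_N$ (via Cayley plus the $S_m \hookrightarrow A_{m+2}$ trick), note that embeddings persist into all larger alternating groups, and then pull $F$ back through the quotient map from a finite-index subgroup onto $A_n$. The only difference is presentational — you make the "upper sections are closed under passing to subgroups of the quotient" step an explicit lemma with the index bookkeeping written out, whereas the paper does the same pullback inline — so there is nothing substantive to compare.
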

\begin{proof}
Every finite group $F$ is a subgroup of $A_N$ for some $N$ (and hence for
all $n\ge N$): this is clear for $S_N$ and if the resulting subgroup has
odd permutations then we can increase $N$ by 2 and add a 2-cycle to the
odd elements.

Now for $F$ and $N$ above, suppose that we have $L\le_f G$ with a 
surjection $\theta$ from $L$ to $A_n$ for some $n\ge N$. As $F\le A_n$
we can pull it back to get $\theta^{-1}(F)\le_f L\le_f G$ with
$\theta\theta^{-1}(F)=F$.
\end{proof}

\begin{thm} If $G=N\rtimes H$ and $G$ has every finite group as an upper 
section then either $N$ or $H$ does too.
\end{thm}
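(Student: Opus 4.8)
\medskip
\noindent\textbf{Proof proposal.}
The plan is to use Lemma 3.3 to reduce from arbitrary finite groups to alternating groups, and then to exploit the simplicity of $A_n$ for $n\ge 5$, which prevents such a group from being ``distributed'' between the normal subgroup $N$ and a subgroup of $H$.

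I would argue by contradiction. Suppose that neither $N$ nor $H$ has every finite group as an upper section. By Lemma 3.3 each of $N$ and $H$ then has only finitely many distinct alternating groups as upper sections, so we may fix $n_0\ge 5$ such that for all $n\ge n_0$ the group $A_n$ is an upper section of neither $N$ nor $H$. Since $G$ does have every finite group as an upper section, for each such $n$ we may choose $L\le_f G$ together with a surjective homomorphism $\theta\colon L\to A_n$. The aim is to descend $\theta$ to $N$ or to $H$.

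The crucial step is to consider $S=L\cap N$. As $L\le_f G$ we have $S\le_f N$, and since $N\unlhd G$ the subgroup $S$ is normal in $L$; hence $\theta(S)\unlhd\theta(L)=A_n$, and simplicity of $A_n$ forces $\theta(S)=A_n$ or $\theta(S)=I$. If $\theta(S)=A_n$, then the restriction of $\theta$ to $S$ is a surjection of the finite index subgroup $S\le_f N$ onto $A_n$, so $A_n$ is an upper section of $N$, contradicting the choice of $n_0$. If $\theta(S)=I$, then $\theta$ factors through $L/S$; but the projection $G\to H$ with kernel $N$ restricts on $L$ to a homomorphism with kernel $L\cap N=S$ and image some $\bar L\le_f H$ (of finite index since $[H:\bar L]\le[G:L]$), so $L/S\cong\bar L$ and we obtain a surjection from $\bar L$ onto $A_n$. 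Then $A_n$ is an upper section of $H$, again contradicting the choice of $n_0$. Either way we reach a contradiction, which proves the theorem.

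I do not anticipate a genuine obstacle here. The only points requiring attention are the bookkeeping that finite index is preserved under $L\mapsto L\cap N$ and under the projection onto $H$ (one could instead quote the relevant parts of Proposition 3.1), and the elementary observation that $L/(L\cap N)$ is precisely the image of $L$ in $H$. The whole content of the statement is the dichotomy coming from simplicity of $A_n$ applied to $S=L\cap N$.
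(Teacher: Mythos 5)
Your proposal is correct and follows essentially the same approach as the paper: pass to $S=L\cap N$, use simplicity of $A_n$ to force $\theta(S)\in\{I,A_n\}$, and in the trivial case identify $L/S$ with $LN/N\le_f H$. The only cosmetic difference is that you phrase the whole argument as a contradiction from ``neither works,'' whereas the paper assumes WLOG that $H$ fails and concludes directly that $N$ succeeds.
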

\begin{proof}
We can assume that there is $n_0\ge 5$ such that for all $n\ge n_0$ the
group $A_n$ is not an upper section of $H$, as otherwise we are done by
Lemma 3.3. Now given any $n$ which is at least $n_0$, we know there is 
$L\le_f G$ and a surjection $\theta:L\rightarrow A_n$. As $N\unlhd G$ we
have that $S=L\cap N\unlhd L$. Thus $\theta(S)$ is normal in $A_n$, meaning
that if we can eliminate $\theta(S)=I$ we obtain $\theta(S)=A_n$, and as
$L\le_f G$ we get $S\le_f N$ so $A_n$ is an upper section of $N$ and we
are done by Lemma 3.3 again.

Now if $\theta(L)=A_n$ but $\theta(S)=I$ we see that $\theta$ factors
through $L/S\cong LN/N$. But $LN/N$ is a finite index subgroup of
$G/N\cong H$, which does not have $A_n$ as an upper section.
\end{proof}

We can form repeated semidirect products $G=G_1\rtimes G_2\rtimes\ldots\rtimes
G_n$ for finitely generated groups $G_i$, where for this to be defined we
would need to bracket all terms in some way and provide the appropriate
homomorphisms. What Corollary 3.2 and Theorem 3.4 show is that no matter
how this is done, $G$ has every finite group as an upper section if and
only if at least one of the $G_i$ does.  Consequently it could be
argued that the semidirect product is behaving like a direct product
for this property.

We also have the following.
\begin{co}
If $G$ is a repeated semidirect product of finitely generated groups
$G_1,\ldots ,G_n$ and $G$ has bigger than exponential subgroup growth
type then at least one of the $G_i$ has every finite group as an upper
section. Conversely if one of the $G_i$ has 
bigger than exponential subgroup growth
type then $G$ has every finite group as an upper section. 
\end{co}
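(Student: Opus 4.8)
The plan is to combine three facts: Theorem~3.1 of \cite{lubseg} (quoted above), which in contrapositive form says that a finitely generated group of bigger than exponential subgroup growth type has every finite group as an upper section; Theorem~3.4, which passes the upper-section property from $N\rtimes H$ down to $N$ or $H$; and Corollary~3.2 together with the correspondence-theorem observation made after Proposition~3.1, which passes it up from a finitely generated normal factor or a quotient of a semidirect product. Throughout I use that a semidirect product of two finitely generated groups is finitely generated (it is generated by generating sets of the two factors), so every bracketed sub-product occurring inside $G_1\rtimes\cdots\rtimes G_n$ is finitely generated.

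For the first assertion, assume $G$ has bigger than exponential subgroup growth type. Since $G$ is finitely generated, Theorem~3.1 of \cite{lubseg} gives that $G$ has every finite group as an upper section. I then argue by induction on $n$, the case $n=1$ being immediate. Writing the outermost bracket of $G$ as an internal semidirect product $G=A\rtimes B$, where $A$ and $B$ are themselves repeated semidirect products of complementary sub-collections of the $G_i$, Theorem~3.4 gives that $A$ or $B$ has every finite group as an upper section, and the inductive hypothesis applied to whichever one this is produces a $G_i$ with the property.

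For the converse, suppose some $G_{i_0}$ has bigger than exponential subgroup growth type; then, being finitely generated, $G_{i_0}$ has every finite group as an upper section by Theorem~3.1 of \cite{lubseg}. I now prove by induction on $n$ that any repeated semidirect product of finitely generated groups, one of whose factors has every finite group as an upper section, has this property itself. Again write the outermost bracket as $G=A\rtimes B$. If $G_{i_0}$ is among the factors of $A$, then by induction $A$ has every finite group as an upper section, and since $A$ is finitely generated and $G=A\rtimes B$ we may apply Corollary~3.2 to conclude $G$ does too. If $G_{i_0}$ is among the factors of $B$, then by induction $B$ has the property, and since $B\cong G/A$ is a quotient of $G$ the property transfers to $G$ by the correspondence theorem. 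This completes the induction, and with it the corollary.

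There is no genuinely hard step here: the substance lies in the already-established Theorem~3.4 and Corollary~3.2, with Theorem~3.1 of \cite{lubseg} only supplying the implication from subgroup growth to the upper-section property (the reverse implication fails, which is why the first assertion cannot be strengthened to conclude that some $G_i$ has bigger than exponential growth). The only points needing a little care are bookkeeping ones: checking that each bracketing really exhibits a genuine internal semidirect decomposition $A\rtimes B$ of the corresponding sub-product, the finite-generation remark above so that Corollary~3.2 applies at every stage, and the identification of ``every finite group as an upper section'' with ``every finite group as a virtual quotient'' noted earlier.
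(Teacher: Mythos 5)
Your proof is correct and follows essentially the same route as the paper, which compresses it into one sentence: the growth hypothesis gives the upper-section property for $G$ (or for $G_{i_0}$) via Theorem~3.1 of \cite{lubseg}, and then one repeatedly applies Theorem~3.4 (downward) or Corollary~3.2 together with the quotient observation (upward). You have simply made explicit the induction on the bracketing structure and the finite-generation bookkeeping that the paper leaves implicit.
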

\begin{proof} The subgroup growth condition implies that $G$ or $G_i$
has every finite group as an upper section, so now repeatedly
apply Theorem 3.4 or Corollary 3.2 respectively.
\end{proof}

We remark that if $G=N\rtimes H$ and $N$ has bigger than exponential
subgroup growth then it is not known whether $G$ does. For instance if
$N=F_n$, which has subgroup growth of superexponential type, and
$H=\z$ then we have subgroup growth of superexponential type for $G$ if
it contains $\z\times\z$ because of largeness, but even exponential
growth is not known in the other cases. At least we see here that $G$ has
the weaker property of having every finite group as an upper section.

\section{Cyclic covers of groups and fibred manifolds}

If we look back to Theorem 2.4 and assume the conditions are
satisfied, where we now replace $H$ with $S$,
we conclude that if $S\le G$ and $S$ surjects to the finite
group $F$ then a finite index subgroup of $G$ surjects to $F$ as well.
But if we examine the proof, we see that this subgroup contains $S$. If
we now specialise to the case where $G=N\rtimes H$ for $N$ finitely
generated with $N=S$ above then any subgroup $L$ of $G$ which contains
$N$ must be of the form $L=N\rtimes (H\cap L)$, because if $x\in H\cap L$
then $nx\in L$ for all $n\in N$.

We give here a quick alternative proof for semidirect products
which can be more useful for constructive purposes.
\begin{prop}
If $G=N\rtimes H$ for $N$ and $H$ finitely generated and $N$ surjects to the
finite group $F$ then there exists $L\le_f G$ containing $N$ with $L$
surjecting to $F$.
\end{prop}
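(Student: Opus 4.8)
The plan is to construct $L$ directly as an internal semidirect product $N\rtimes R$ with $R\le_f H$, avoiding any appeal to Theorem 2.4. Let $\phi:N\to F$ be the given surjection and put $K=\ker\phi\unlhd_f N$, so $i:=[N:K]=|F|$. Since $N$ is finitely generated it has only finitely many subgroups of index $i$, and because $N\unlhd G$ the group $H$ permutes this finite set by conjugation; hence the stabiliser $R_0$ of $K$ in $H$ has finite index.

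Next I would cut $R_0$ down further. For $h\in R_0$, conjugation by $h$ preserves $K$, so it induces an automorphism of $N/K$, and via the isomorphism $N/K\cong F$ coming from $\phi$ this yields an element $\alpha_h\in\mbox{Aut}(F)$ with $\phi(hnh^{-1})=\alpha_h(\phi(n))$ for all $n\in N$. One checks that $h\mapsto\alpha_h$ is a homomorphism $R_0\to\mbox{Aut}(F)$; as $\mbox{Aut}(F)$ is finite its kernel $R$ has finite index in $R_0$, hence in $H$, and every $h\in R$ now satisfies $\phi(hnh^{-1})=\phi(n)$ for all $n\in N$.

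Set $L=NR$. Since $N\unlhd G$, $R\le H$ and $R\cap N\subseteq H\cap N=I$, this is an internal semidirect product $L=N\rtimes R$; it contains $N$ and $[G:L]=[H:R]<\infty$. Finally I would define $\psi:L\to F$ by $\psi(nr)=\phi(n)$, which makes sense because each element of $L$ is uniquely of the form $nr$ with $n\in N$, $r\in R$. Expanding $(n_1r_1)(n_2r_2)=n_1(r_1n_2r_1^{-1})r_1r_2$, the verification that $\psi$ is a homomorphism collapses to the identity $\phi(r_1n_2r_1^{-1})=\phi(n_2)$, which holds by the choice of $R$; and $\psi$ is onto since $\phi$ is. This is the required $L$.

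The construction is completely explicit, the index $[G:L]$ being bounded by the size of the $H$-orbit of $K$ times $|\mbox{Aut}(F)|$. The only step where the hypotheses really bite is the homomorphism check for $\psi$: it is essential that $R$ act trivially on $N/K$ and not merely stabilise $K$, since a semidirect product $A\rtimes B$ need not surject onto $A$. Finite generation of $N$ is used purely to make the orbit of $K$ finite (finite generation of $H$ is not actually needed here, though it ensures $G$ is finitely generated as in the applications).
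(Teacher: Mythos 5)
Your proof is correct and follows essentially the same strategy as the paper: construct $L=N\rtimes R$ with $R\le_f H$ acting trivially on $N$ modulo a finite index subgroup contained in $K=\ker\phi$, then extend $\phi$ to $L$ by sending $R$ to the identity. The paper's version differs only in a small technical device: rather than first passing to the stabiliser $R_0$ of $K$ in $H$ and then to the kernel of the induced $R_0$-action on $N/K\cong F$, it replaces $K$ by a finite index \emph{characteristic} subgroup $C\le K$ of $N$ (the intersection of all subgroups of index $[N:K]$), so that all of $H$ already acts on $N/C$, and then takes $R$ to be the kernel of that action. Both routes rest on the same use of finite generation of $N$ (finitely many subgroups of a given index); yours is marginally more economical and, as you note, gives the cleaner bound $[G:L]\le |\mathcal{O}|\cdot|\mathrm{Aut}(F)|$ where $\mathcal{O}$ is the $H$-orbit of $K$, whereas the paper's bound involves $|\mathrm{Aut}(N/C)|$ which may be larger.
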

\begin{proof}
If $K\unlhd_f N$ with $N/K\cong F$ then, as $N$ is finitely generated,
we have a finite index characteristic subgroup $C$ of $N$ which is
contained in $K$. Consequently $N/C$ surjects to $F$ too and the
conjugation action of $H$ on $N$ descends to one on $N/C$. As $N/C$ is
finite, we take the intersection $S\le_f H$ of all point stabilisers of
this action, so $sns^{-1}=nC$
for all $n\in N$ and $s\in S$. We then let $L=N\rtimes S$ and observe that
the surjection from $N$ to $F$ via $N/C$ extends to $L$ by sending all of
$S$ to the identity.
\end{proof}

This result might not be of interest if there is no reason to favour
finite index subgroups of $G$ that contain $N$ over other finite
index subgroups. However there is one setting, motivated by topology,
where these subgroups are given an important r\^{o}le. This is when
$G=N\rtimes_\alpha\z$ for $\alpha$ an automorphism of $N$. If $\z$ is
generated by the element $t$ then we define the {\bf cyclic cover} $G_n$
of $G$ to be the index $n$ subgroup $\langle N,t^n\rangle$. We have that
$G_n\unlhd G$; indeed we can think of $G_n$ as being the kernel of
the map from $G$ to the cyclic group $C_n$ given by the exponent sum of
$t$ modulo $n$. If $N$ has
a presentation $\langle g_1,\ldots ,g_k|r_1,r_2,\ldots\rangle$ then a
presentation for $G$ would be
\[\langle g_1,\ldots ,g_k,t|r_1,r_2,\ldots ,tg_1t^{-1}=\alpha(g_1),
\ldots ,tg_kt^{-1}=\alpha(g_k)\rangle\]
and for $G_n$ we have
\[\langle g_1,\ldots ,g_k,s|r_1,r_2,\ldots ,sg_1s^{-1}=\alpha^n(g_1),
\ldots ,sg_ks^{-1}=\alpha^n(g_k)\rangle\]
where $s=t^n$.

In particular all of the $G_n$ are generated by $k+1$ elements.
The connection with topology is that if $N=\pi_1(M)$, the
fundamental group of a $d$ dimensional manifold $M$ then on taking a
homeomorphism $h$ of $M$, we can form the $d+1$ dimensional manifold which
is fibred over the circle $S^1$ with fibre $M$ using $h$, and this has
fundamental group $N\rtimes_{h_*}\z$ where $h_*$ is the automorphism
of $N$ induced by $h$. If $d=2$ then $N$ must be the fundamental group
of a surface, and if this surface is compact and orientable 
then $N=F_n$ for a bounded surface and $\pi_1(S_g)$ if it is closed.
In particular this discussion applies to fibred knots, where $N$ is free
and the cyclic covers take on particular importance. We see that in the
case where $G=N\rtimes\z$, any finite index subgroup $L$ of $G$ that
contains $N$ is a cyclic cover, as if $G=\langle N,t\rangle$ then
$L=\langle N,t^n\rangle$ for $n$ the smallest positive integer with
$t^n\in L$. Thus if there is a surjection from $N$ to a finite group $F$
then there is 
also a surjection from a cyclic cover of $G$ to $F$. Consequently we
see all the finite images of $N$ amongst the finite images of the cyclic
covers of $G$. We also have:
\begin{co}
If the finitely generated group $N$ surjects to the finite group $F$
then for any $G=N\rtimes_\alpha\z$ there are infinitely many cyclic
covers of $G$ that surject to $F$.
\end{co}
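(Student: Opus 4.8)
The plan is to extract the finite characteristic subgroup from the proof of Proposition 4.1 and observe that it forces surjectivity not just for one cyclic cover but for an entire arithmetic progression of them. Exactly as in that proof, I would first pick $K\unlhd_f N$ with $N/K\cong F$ and, using that $N$ is finitely generated, a finite index characteristic subgroup $C$ of $N$ with $C\le K$. Since $C$ is characteristic, $\alpha$ descends to an automorphism $\bar\alpha$ of the finite group $N/C$, and so $\bar\alpha$ has some finite order $d\ge 1$.

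Next I would show that for every positive multiple $n$ of $d$ the cyclic cover $G_n=\langle N,t^n\rangle$ surjects to $F$. Writing $G_n=N\rtimes_{\alpha^n}\z$ (with $\alpha^n$ the conjugation action of $t^n$), a homomorphism $G_n\to F$ is the same as a homomorphism $\psi:N\to F$ together with an element $f\in F$ satisfying $f\psi(g)f^{-1}=\psi(\alpha^n(g))$ for all $g\in N$. Taking $\psi$ to be the given surjection $N\to N/K\cong F$ and $f=e$, the condition reduces to $\psi\circ\alpha^n=\psi$; but $\psi$ factors through $N/C$ since $C\le K$, and $\alpha^n$ induces $\bar\alpha^n=\mbox{id}$ on $N/C$ because $d\mid n$, so $\psi(\alpha^n(g))$ depends only on the image of $\alpha^n(g)=g$ in $N/C$ and equals $\psi(g)$. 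Hence $G_n\to F$ is defined, and it is onto because $N$ already surjects to $F$. (Equivalently, in terms of the presentation of $G_n$ displayed above, one sends each $g_i$ to its image in $F$ and $s=t^n$ to the identity, the relations $r_j$ holding as in $F$ and each relation $sg_is^{-1}=\alpha^n(g_i)$ becoming $\bar g_i=\overline{\alpha^n(g_i)}$, valid since $\bar\alpha^n=\mbox{id}$ on $N/C$ and hence on the quotient $N/K$; or, more cleanly still, one notes that the natural map $N\rtimes_{\alpha^n}\z\to(N/C)\rtimes_{\bar\alpha^n}\z$ has image $(N/C)\times\z$ when $d\mid n$, which surjects onto $N/C$ and thence onto $F$.)

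Since there are infinitely many multiples of $d$, this produces infinitely many cyclic covers $G_n$ of $G$ that surject to $F$, completing the argument. I do not expect a genuine obstacle here: the only point requiring care is checking that the candidate homomorphism respects the conjugation relations, and this is precisely where the choice of $C$ (characteristic and contained in $K$) and the divisibility $d\mid n$ are used.
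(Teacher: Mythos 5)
Your argument is correct and is essentially the paper's own proof: both extract the characteristic subgroup $C\le K$ from Proposition 4.1, observe that $\alpha$ induces a finite-order automorphism $\bar\alpha$ of $N/C$, and conclude that every cyclic cover $G_n$ with $n$ a multiple of the order of $\bar\alpha$ surjects to $F$ by sending $N\to N/K\cong F$ and $t^n\mapsto e$. The only difference is that you spell out the verification of the semidirect-product relations explicitly, which the paper leaves implicit.
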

\begin{proof}
On looking at the proof of Proposition 4.1 we see that the cyclic cover
$G_n$ of $G$ surjects to $F$ provided that the automorphism of $N/C$
induced by $\alpha$ satisfies $\alpha^n=\mbox{id}$ (or instead of $C$ any
normal subgroup $J$ of $N$ which is contained in $K$ and fixed by
$\alpha$ would do).
Thus any integer multiple of $n$ works too.
\end{proof}

This observation has various consequences if we are interested in 
(non abelian) finite simple images of groups, as by the classification
of finite simple groups all of them are 2-generated.
\begin{co}
If the finitely generated group $N$ surjects to the free group $F_2$
and $G$ is any group of the form $N\rtimes_\alpha\z$ then for any
finite list of finite simple groups $S_1,\ldots ,S_l$ there exists
infinitely many cyclic covers of $G$ which surject to all of $S_1,\ldots
,S_l$.
\end{co}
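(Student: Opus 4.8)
The plan is to reduce the statement to Corollary 4.2, applied once for each $S_i$, and then amalgamate the resulting cyclic covers. By the classification of finite simple groups, as noted already in the text, each $S_i$ is generated by two elements, so there is a surjection $F_2\to S_i$; composing it with the given surjection $N\to F_2$ yields a surjection $N\to S_i$. Hence for every $i\in\{1,\ldots,l\}$ the finitely generated group $N$ surjects onto the finite group $S_i$, which is exactly the hypothesis of Corollary 4.2.

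Corollary 4.2 then provides, for each $i$, a positive integer $n_i$ such that the cyclic cover $G_{n_i}$ surjects onto $S_i$; and, as its proof makes clear (the operative condition being that the automorphism induced by $\alpha$ on a suitable finite quotient of $N$ has order dividing the index), the cover $G_m$ surjects onto $S_i$ whenever $m$ is a multiple of $n_i$. I would now take $n$ to be any common multiple of $n_1,\ldots,n_l$, for instance any multiple of $n_1n_2\cdots n_l$. For such $n$ the single cyclic cover $G_n$ surjects onto each of $S_1,\ldots,S_l$ simultaneously, and as there are infinitely many admissible $n$, yielding covers of pairwise distinct index, we obtain infinitely many cyclic covers of $G$ with the required property.

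The only point that goes beyond applying Corollary 4.2 term by term is ensuring that one cover works for the whole list at once, and this is precisely supplied by the ``any multiple works'' feature of that corollary. An equivalent implementation: fix surjections $N\to S_i$ with kernels $K_i$, use finite generation of $N$ to choose finite index characteristic subgroups $C_i\le K_i$, set $C=\bigcap_i C_i$ (still characteristic, of finite index, and $\alpha$-invariant), and let $n$ be a multiple of the order of the automorphism of the finite group $N/C$ induced by $\alpha$; then $G_n$ surjects onto $N/C$ by the proof of Proposition 4.1, hence onto each $S_i$. I would also remark that the conclusion is surjection onto each $S_i$ separately, not onto the direct product $S_1\times\cdots\times S_l$; the latter is only guaranteed to be $2$-generated, and so amenable to a direct application of Corollary 4.2, when the $S_i$ are pairwise non-isomorphic.
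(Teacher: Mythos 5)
Your proof is correct and follows essentially the same route as the paper's: apply Corollary 4.2 to each $S_i$ to obtain an $n_i$ with the ``any multiple works'' feature, then take a common multiple of the $n_i$. Your alternative implementation via a single characteristic subgroup $C=\bigcap_i C_i$ is a mild repackaging of the same mechanism, and your closing remark about the direct product is a sound aside but not needed for the statement.
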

\begin{proof}
As $N$ surjects to $F_2$ it consequently surjects to each of the $S_i$.
Applying Corollary 4.2 gives us an integer $n_i$ such that any cyclic
cover of $G$ having index 0 modulo $n_i$ surjects to $S_i$. Thus we can
take any multiple of the lowest common multiple of $n_1,\ldots ,n_l$.
\end{proof}

Again applications are provided by compact 3-manifolds which are
fibred over the circle, as if the fibre is a surface of negative
Euler characteristic then the fundamental group is either (non abelian)
free, a closed orientable surface group (of genus at least 2), or
a closed non-orientable surface group (of genus $g\ge 3$) with
fundamental group having a presentation
$\langle x_1,\ldots ,x_{g}|x_1^2x_2^2\ldots x_{g}^2\rangle$. This also
surjects to $F_2$ unless $g=3$ (see \cite{LS} page 52). In particular if $M$ is
an orientable hyperbolic 3-manifold (with or without boundary) that 
fibres over the circle then any finite simple group is a quotient of
a cyclic cover of $M$.

We now say a few words on what is known about the finite simple images of
the fundamental group of a hyperbolic 3-manifold $M$ of finite volume. 
It is certainly true that    
$M$ has infinitely many images of type $PSL(2,\mathbb F)$ for $\mathbb F$ a
finite field, coming from the fact that $\pi_1(M)$ is a subgroup of
$PSL(2,\C)$. However a lot less seems to be known about other types.
There exist examples $M$, both closed and with boundary, where
$\pi_1(M)$ surjects to $F_2$ and so all finite simple groups appear. 
A range of results are obtained in \cite{dt} which fixes a
finite simple group $F$ and considers the question of whether $\pi_1(M)$
has $F$ as a quotient from a probabilistic point of view. In particular
the authors take a genus $g\ge 2$ and define the concept of a manifold $M$ of a
random Heegaard splitting of genus $g$ and complexity $L$. There are
only finitely many of these for fixed $g$ and $L$ so the probability
that $M$ has a particular property, such as $\pi_1(M)$ surjecting to
a finite group $F$, 
is well defined. It is shown in Proposition 6.1 that for any $F$
this probability tends to a limit $p(F,g)$ as $L$ tends to infinity.
Moreover when $F$ is any non abelian finite
simple group, Theorem 7.1 obtains the limit of $p(F,g)$ as $g$ tends to
infinity: in particular it is strictly between 0 and 1.

But if we specialise to a family of simple groups not involving
$PSL(2,\mathbb F)$ then things are less clear: for instance
Question 7.6 of this paper asks whether every closed (or finite
volume) hyperbolic
3-manifold has a quotient $A_n$ for some $n\ge 5$. Their
Theorem 7.7 does state that given $\epsilon>0$ there is $g_0>0$
such that for all $g\ge g_0$ the probability of a manifold $M$ of a
random Heegaard splitting of genus $g$ having a quotient $A_n$
for some $n\ge 5$,
or even at least $k$ quotients of distinct groups $A_n$ for fixed $k$,
is at least $1-\epsilon$. We can have variants on this question: does
every closed (or finite volume) hyperbolic 3-manifold have a quotient
$A_n$ for infinitely many $n$ or all but finitely many $n$? We do not
know of a specific example proven not to have either one of these
properties.

We did however locate an example of a closed hyperbolic 3-manifold which
surjects to all but finitely many, but not all $A_n$. In \cite{cmt} the
extended $[3,5,3]$ Coxeter group $\Gamma$, which is now known to be the
fundamental group of the smallest closed non orientable hyperbolic 3-manifold,
is studied along with its orientable double cover $\Gamma^+$ (the smallest
orientable example). Theorem 4.1 in this paper states that for all large
$n$, $A_n$ and $S_n$ are quotients both of $\Gamma$ and of $\Gamma^+$.
This is proved by an intricate argument that links together copies 
of particular permutation representations of each group in order to form
transitive permutation representations of arbitrarily large degree.
However it is easily checked, using the given presentation and MAGMA or
GAP, that $\Gamma^+$ does not surject to small $A_n$. If one wants a
3-manifold, rather than a 3-orbifold, with this property then they show that
the group $\Sigma_{60a}$ with index 60 in $\Gamma^+$ is torsion free,
thus $\mathbb H^3/\Sigma_{60a}$ is a closed orientable hyperbolic 3-manifold.
Again the computer tells us that it does not surject to $A_n$ for
$n=5,6,7,10$ (though it does for 8 and 9). As for large $n$, any
homomorphism sending $\Gamma^+$ to $A_n$ maps $\Sigma_{60a}$ to a subgroup
of index at most 60, which must be $A_n$ for $n\ge 61$. In particular
this observation shows that if a group $G$ surjects to infinitely many
$A_n$, or all but finitely many $A_n$, then any finite index subgroup
has this property too.

Another point of interest in this question is provided by \cite{lubseg}
Theorem 3.5 (iii)
which states that if a finitely generated group surjects to only
finitely many groups from all $A_n$ and $S_n$ then the growth type
for the number of maximal subgroups of index $n$ is at most $n^{\sqrt n}$.
Now for the free group $F_2$ it is $n^n$ (the same growth type as for
all subgroups of index $n$) and, as maximal subgroups pull back to
maximal subgroups under any surjection by the correspondence theorem,
any hyperbolic 3-manifold with a surjection from its fundamental group
to $F_2$ must also have this property. Thus if there exist hyperbolic
3-manifolds with only finitely many surjections to $A_n$ and $S_n$, we
would witness two markedly different types of growth of a natural
quantity purely within the class of hyperbolic 3-manifolds.

We finish this section by pointing out that if one returns to the
property of having every finite group as an upper section then any
closed or finite volume hyperbolic 3-manifold which is virtually fibred,
that is having a finite cover or equivalently a finite index subgroup
which is fibred, has this property by dropping down and using
Corollary 3.2. A famous question of Thurston asks whether this is this
case for all closed or finite volume hyperbolic 3-manifolds. Having every
finite group as an upper section would also follow from \cite{lr} Theorem
2.1 if every such 3-manifold had a fundamental group which is LERF.
Currently both these questions are open. Another notorious open
question concerns word hyperbolic groups and whether they are always
residually finite. It is shown in \cite{olsmp} that if this is true
then every (non elementary) word hyperbolic group would have infinitely
many non abelian finite simple quotients. It does not however imply 
unconditionally that a given non elementary word hyperbolic group $H$ has
non abelian finite simple quotients. If one could find such 
a group $H$ and prove that
it has no (or just finitely many) non abelian finite simple quotients
then the above result implies that $H$  has some quotient which is also
non elementary word hyperbolic but not residually finite. However let us
quote a comment from \cite{dt} as a warning
for when a computer program has been given a finite
presentation but not found any non abelian finite simple quotients. This
quote relates to random presentations of deficiency zero, but it could
well apply in other situations:\\
``In any case, for a typical deficiency 0 group that has no quotients among
the first few non-abelian simple groups, it is clear that if it has any such
quotient, the index must be so astronomically large as to be far beyond 
brute force computation.''   

\section{Finite images of cyclically presented groups}

The literature on cyclically presented groups is extensive, so rather
than a full list of papers and summary of results we content ourselves
with citing \cite{ce} as a very recent paper on the subject: any interested
reader could then follow back the references therein. We also outline
problems of this area that are most relevant to us here.

Let $v$ be an element (without loss of generality cyclically reduced)
of the free group $F_n$ with free generating set
$x_0,\ldots ,x_{n-1}$. There is a natural automorphism $\pi_n$
of $F_n$ having order $n$, obtained from the $n$-cycle $(1,2,\ldots ,n)$.
We define the {\bf cyclically presented group} $G_n(v)$ to be the
group obtained from the deficiency zero presentation
\[\langle x_0,\ldots ,x_{n-1}|v(x_0,\ldots,x_{n-1}),\pi_n
(v(x_0,\ldots,x_{n-1})),
\ldots ,\pi_n^{n-1}(v(x_0,\ldots ,x_{n-1}))\rangle;\]
of course $\pi_n^i(v(x_0,\ldots,x_{n-1}))=v(x_i,\ldots,x_{n-1+i})$ where we
always take subscripts modulo $n$. In particular the action of $\pi_n$
descends to $G_n(v)$.
One then asks, given $n\in\n$ and
any such word $v\in F_n$,
what properties does $G_n(v)$ have: for instance is it
non-trivial, infinite, non abelian or moreover non soluble? Does it contain
$F_2$ or is it even a large group? Some approaches to this question have
looked at a range of $v$ and where $n$ is less than some bound. However
as a (cyclically reduced) word of $F_d$ is also one in $F_n$ for $n\ge d$, 
our focus will be to fix $v\in F_d$ and then examine this
infinite family of groups over all $n\ge d$. We can then ask: does
$G_n(v)$ have any of the above properties for infinitely many $n$? How
about for all but finitely many $n$?

Before listing what is already known about the above problem, 
we mention an approach to these questions which is much used and which
will be especially suitable for us here. On being given a cyclically
reduced $v\in F_d$ and taking letters $x$ and $t$,
we can define the word $w(x,t)=v(x,txt^{-1},\ldots ,$ 
$t^{d-1}xt^{-(d-1)})\in F_2$ (which need not be cyclically reduced but it
does not matter).
If we define the group $H_n(w)$ by the 2 generator 2 relator
presentation
$\langle x,t|w(x,t),t^n\rangle$ for any $n\in\n$ then it is easily verified 
that $H_d(w)=G_d(v)\rtimes_{\pi_d} C_d$ for $C_d$ the cyclic group of order
$d$, and this is also true on replacing $d$ throughout with $n$ when $n\ge d$.
Thus on being given $v$, instead of considering 
the family $G_n(v)$ we can convert $v$ into $w$ and examine the
corresponding family $H_n(w)$. We have a straightforward lemma which will
be of use.
\begin{lem} (i) Given a cyclically reduced element $v\in F_d$ and the
corresponding families of groups $G_n(v)$ and $H_n(w)$ for $n\ge d$,
we have that $G_n(v)$ is non-trivial, infinite, non soluble,
contains $F_2$ or is large if and only if $H_n(w)$ is not cyclic of
order $n$, infinite, non soluble,
contains $F_2$ or is large respectively.\\
(ii) If there exists $n$ such that $G_n(v)$, respectively $H_n(w)$,
has one of the corresponding properties above then this also holds for
$G_m(v)$, respectively $H_m(w)$, whenever $n$ divides $m$.
\end{lem}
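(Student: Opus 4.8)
The plan is to use the isomorphism $H_n(w)=G_n(v)\rtimes_{\pi_n}C_n$ already noted in the text, which exhibits $G_n(v)$ as a normal subgroup of $H_n(w)$ of index exactly $n$. Part (i) is then a list of routine checks that each property in question is a commensurability invariant --- or, in the case of ``non-trivial'', is correctly matched with ``not cyclic of order $n$''. Explicitly: $G_n(v)$ is trivial precisely when $|H_n(w)|=n$, i.e. when $H_n(w)\cong C_n$; $G_n(v)$ is infinite precisely when its finite-index overgroup $H_n(w)$ is; $G_n(v)$ is non-soluble precisely when $H_n(w)$ is, since $H_n(w)$ is an extension of $G_n(v)$ by the abelian group $C_n$ and solubility passes to subgroups and quotients and is closed under extensions; $G_n(v)$ contains $F_2$ precisely when $H_n(w)$ does, the forward implication being trivial and the reverse holding because a copy of $F_2$ inside $H_n(w)$ meets $G_n(v)$ in a subgroup of finite index, which by Nielsen--Schreier is free of rank at least $2$; and $G_n(v)$ is large precisely when $H_n(w)$ is, largeness being a ``large'' property in the sense of \cite{pr} recalled in Section 2 and so unaffected by passing between a group and a finite-index subgroup.

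For part (ii) I would first produce, whenever $n\mid m$, natural surjections $H_m(w)\twoheadrightarrow H_n(w)$ and $G_m(v)\twoheadrightarrow G_n(v)$, and then note that every listed property passes up from a quotient group to the whole group. The surjection of the $H$'s is immediate: $H_m(w)=\langle x,t\mid w(x,t),t^m\rangle$ and imposing the extra relation $t^n$ (which forces $t^m$ since $n\mid m$) gives $H_n(w)$, so the identity on $\{x,t\}$ induces $H_m(w)\twoheadrightarrow H_n(w)$. The surjection of the $G$'s comes from the subscript-reduction homomorphism $F_m\to F_n$, $x_i\mapsto x_{i\bmod n}$: since $n\mid m$, reducing a subscript modulo $m$ and then modulo $n$ agrees with reducing it modulo $n$, so this map sends each relator $\pi_m^{\,j}(v)$ of $G_m(v)$ to the relator $\pi_n^{\,j}(v)$ of $G_n(v)$ and hence descends to a surjection $G_m(v)\twoheadrightarrow G_n(v)$. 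Granting these, non-triviality, infiniteness and non-solubility pass up because a quotient of a trivial, finite or soluble group is again trivial, finite or soluble; largeness passes up because it is preserved under surjections (once more by the definition of a ``large'' property); and containing $F_2$ passes up because the preimage of a copy of $F_2$ under a surjection maps onto $F_2$, any surjection onto a free group splits, and so that preimage contains $F_2$. The $H$-version of (ii) for ``not cyclic of order $n$'' is then read off via part (i): $H_m(w)\cong C_m$ forces $G_m(v)$ trivial, hence $G_n(v)$ trivial by the surjection $G_m(v)\twoheadrightarrow G_n(v)$, hence $H_n(w)\cong C_n$.

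The one point I expect to need genuine care is the construction of the surjection $G_m(v)\twoheadrightarrow G_n(v)$ and the check that it is well defined: this is the only place where the divisibility $n\mid m$ is used, and one has to verify that the cyclic wrap-around of subscripts in the relators of $G_m(v)$ is compatible with that in $G_n(v)$. Everything else reduces to the standard facts that each property on the list is a commensurability invariant inherited by quotients, together with the splitting of epimorphisms onto free groups for the ``contains $F_2$'' case.
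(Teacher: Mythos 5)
Your proof of part (i) is essentially the paper's: both appeal to the semidirect decomposition $H_n(w)=G_n(v)\rtimes_{\pi_n}C_n$ and the fact that each of the listed properties is a commensurability invariant (and that triviality of $G_n(v)$ is equivalent to $H_n(w)\cong C_n$).

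For part (ii) you do genuinely diverge from the paper in one respect: you build an explicit surjection $G_m(v)\twoheadrightarrow G_n(v)$ via the subscript-reduction map $x_i\mapsto x_{i\bmod n}$, and verify (correctly --- each relator $\pi_m^{\,j}(v)$ goes to $\pi_n^{\,j\bmod n}(v)$ precisely because $n\mid m$) that it descends. The paper does not build any map at the $G$-level at all: it only records the obvious surjection $H_m(w)\twoheadrightarrow H_n(w)$ obtained by adding the relator $t^n$, notes that the listed properties are preserved under prequotients, and then pushes everything back and forth through part (i), dealing with the cyclic case by an abelianization observation ($t$ maps to a generator of $C_n$ under abelianization of $H_n(w)$, so $H_m(w)\cong C_m$ forces $H_n(w)=\langle t\mid t^m,t^n\rangle\cong C_n$). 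The paper's route is shorter --- one surjection and one appeal to (i) suffice --- while your extra $G$-level surjection makes both halves of (ii) symmetric and gives a slightly cleaner treatment of the ``not cyclic of order $n$'' case, since you reduce it to triviality of $G_m(v)$ and $G_n(v)$ rather than inspecting the presentation of $H_m(w)$ directly. Both are valid; yours costs a well-definedness check, the paper's costs the abelianization remark. One cosmetic slip: you write ``to the relator $\pi_n^{\,j}(v)$'' where $j$ may exceed $n-1$; this is harmless since $\pi_n$ has order $n$, but strictly the target relator is $\pi_n^{\,j\bmod n}(v)$.
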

\begin{proof}
(i) Any property which is preserved by finite index subgroups and extensions
by finite cyclic groups will apply equally to $G_n(v)$ and $H_n(w)$. As for
triviality, if $H_n(w)=C_n$ then we cannot have $|G_n(v)|>1$, nor can it
be infinite.\\
(ii) The point is that if $n$ divides $m$ then $H_m(w)$ surjects to
$H_n(w)$ by adding the relation $t^n$.
By abelianising the presentation for any $H_n(w)$, we see that
it must surject to $C_n$ with $t$ mapping to a generator because the
exponent sum of $t$ in $w(x,t)$ is zero. Now if $H_m(w)=C_m$ then it
is the cyclic group $\langle t|t^m\rangle$, thus $H_n(w)$ is the
group $\langle t|t^m,t^n\rangle=C_n$. 
All other properties mentioned above for $H_n(w)$ are preserved
by prequotients, and we can now use Part (i) to transfer these over to 
$G_n(v)$.
\end{proof}
Thus we have that possessing any of these properties for some $n$ in a
family of the form $\{G_n(v):n\ge d\}$ or $\{H_n(w):n\ge d\}$ 
implies the same for infinitely many $n$.

Given such a $v\in F_d$ (and corresponding $w(x,t)\in F_2$) we can form
the {\bf associated polynomial} $f_v(t)\in\z[t]$ of degree at most
$d-1$ which is
defined by $f_v(t)=a_{d-1}t^{d-1}+\ldots +a_1t+a_0$, where $a_i$ is the
exponent sum of the letter $x_i$ in $v$ (in fact this is just the
Alexander polynomial of $w(x,t)$, although the latter is a Laurent
polynomial which is only defined
up to multiplication by $\pm t^j$ for any $j\in\z$). This polynomial
plays an important part in the theory of cyclically presented groups,
although of course different words $v_1,v_2\in F_d$ 
can have the same polynomial
$f_{v_1}=f_{v_2}$; indeed this happens if and only if $v_1$ and $v_2$
represent the same element in the abelianisation $\z^d$ of $F_d$.

We now give a brief outline of known results in relation to the problems
above. First we ask: is there $v$ such that $G_n(v)$ is trivial for infinitely
many $n\ge d$? The only examples we know are the obvious ones $v=x_i^{\pm 1}$
for $0\le i\le d-1$, giving $w=t^ixt^{-i}$.
In \cite{ce} the abelianisation $G_n(v)^{\mbox{ab}}
=G_n(v)/[G_n(v),G_n(v)]$
of $G_n(v)$ is studied, using results in \cite{c}. Let us say that $f_v$
is of cyclotomic type if it is a product of (not necessarily distinct)
cyclotomic polynomials $\pm\Phi_m$, up to multiplication by $\pm t^i$.
It is shown there that if $f_v$ is not of cyclotomic type (and $f_v\ne\pm t^i$)
then $G_n(v)^{\mbox{ab}}$
is non trivial for all but finitely many $n$ and if $f$ is of
cyclotomic type then
$G_n(v)^{\mbox{ab}}$ is non trivial for infinitely many $n$. Thus only
$v$ with an associated polynomial of cyclotomic type could possibly yield
$G_n(v)$ being trivial for infinitely many $n$ and only $\pm t^i$ could do
for all $n$, but we do not know of any examples other than the above
in either case. We remark
in Example 5.4 that there exist cases where $f_v(t)=\pm t^i$ but which
need other methods to show non triviality.

However any hope that there might be equivalent statements or open questions
for the other properties can be dispelled because there exist examples
where $G_n(v)$ is finite cyclic for all $n$. For instance Theorem 3 of
\cite{prf} implies that if we take $v=x_0^\alpha x_1^{-\beta}$ with $\alpha$
and $\beta$ integers at least 2 which are coprime then $G_n(v)$ is cyclic
of order $\alpha^n-\beta^n$. But if we are happy to argue ``generically''
then we can use a powerful theorem of Ol'shanski\u{i} in \cite{Olhp}. This
states that if $H$ is a non elementary word hyperbolic group and $g\in H$
is any element of infinite order then the quotient $Q_n(g)$ of $H$ formed by
adding the relator $g^n$ is again non elementary word hyperbolic for all
but finitely many $n$. In particular we see that if the 2 generator
1 relator group $H(w)=\langle x,t|w(x,t)\rangle$ is a 
non elementary word hyperbolic group then $H_n(w)$
will be non elementary word hyperbolic as well for all but finitely many $n$,
thus these groups will certainly be infinite, non soluble and will
contain $F_2$. However we note that we can certainly have $H(w)$ not
word hyperbolic but $H_n(w)$ is for all large $n$, as when performing
Dehn filling which is mentioned later.
The claim of genericity holds because given a free group $F_r$ of finite rank
$r\ge 2$ and any word in $F_r$ of length at most $l$, the proportion of
1-relator groups thus obtained which are non elementary word hyperbolic
tends to 1 as $l$ tends to infinity. (Here the situation is slightly
different as our word $w(x,t)$ has exponent sum zero in $t$, but we
can always apply a change of basis to put an arbitrary word in this form.)    

There is also a parallel statement for largeness: if $H$ is a finitely
generated large group and $g\in H$ is any element 
then Lackenby shows in \cite{lac} that the quotient $Q_n(g)$ formed by
adding the relator $g^n$ is also large, but now only for infinitely
many $n$. Note that if we get largeness of $Q_n(g)$ for one value of $n$ then 
we immediately get largeness for all $Q_{kn}(g)$ with $k\ge 1$ as $Q_{kn}(g)$
surjects to $Q_n(g)$. There is a short proof of this result in \cite{olos},
but Corollary 3.5 of \cite{belos} gives an example of a large torsion free
hyperbolic group $H$ and an element $g\in H$ such that $Q_n(g)$ is not large
for odd $n$. Consequently we see that by starting with a large group of the
form $H(w)=\langle x,t|w(x,t)\rangle$ with the exponent sum of $t$ being
zero, for which there are lots of examples, then we obtain families
$H_n(w)$ and $G_n(v)$ which are large for infinitely many $n$, but we do not
know of examples which are large for all but finitely many $n$.

However let us now concentrate on the finite quotients of $H_n(v)$ and
$G_n(w)$. We might ask under what circumstances do we find examples having
non abelian finite simple quotients: this need not be implied by the fact
that the group is infinite or non soluble, and is not known to be implied
if the group is non elementary word hyperbolic or large. However there is
a class of groups where we can use the results in Section 4 to get a
positive answer. Suppose that $v$ is any cyclically reduced word in $F_d$
such that the resulting 2 generator 1 relator group 
$H(w)=\langle x,t|w(x,t)\rangle$ is a semidirect product 
$F_r\rtimes_\alpha\z$ of a free group of rank $r$ (for $r\ge 2$) 
with the integers, and where
$t$ generates $\z$ with $x\in F_r$
and the automorphism $\alpha$ is conjugation by $t$.
If this is the case then let us call $w$ a {\bf free-by-cyclic} word
of rank $r$.
In fact it is straightforward and well known
to tell if a given $w$ is a free-by-cyclic
word: first suppose that the cyclically reduced word
$v(x_0,x_1,\ldots ,x_{d-1})$
is such that the generator with the smallest index that actually appears
in $v$ is $x_s$, and similarly $x_l$ is the largest, where
$0\le s\le l\le d-1$ and $r=l-s$.
The condition which must hold
is that $x_s$ appears only once in $v$, either as
itself or as its inverse, and the same for $x_l$.
If so then the associated polynomial
$f_w(t)$ is monic at both ends (both the largest non zero coefficient $a_l$
and smallest non zero coefficient $a_s$ are equal to $\pm 1$) and $r=l-s\le d$.
We then have:
\begin{thm}
If $v$ is any cyclically reduced word in $F_d$ giving rise to a
free-by-cyclic word $w$ of rank $r$ at least 2 then
any non abelian finite simple group appears as a quotient of the
cyclically presented group $G_n(v)$ for some $n$, indeed infinitely many $n$.
This is also true if given a finite list of non abelian finite simple groups:
there will be infinitely many $n$ such that $G_n(v)$ surjects to all of these
groups.
\end{thm}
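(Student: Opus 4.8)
The plan is to reduce the statement to Corollary 4.3 applied to the group $H(w) = \langle x,t \mid w(x,t)\rangle$, which by hypothesis is a semidirect product $F_r \rtimes_\alpha \z$ with $r \geq 2$. Since $F_r$ for $r\geq 2$ surjects to $F_2$, Corollary 4.3 immediately gives us, for any finite list of (non abelian) finite simple groups $S_1,\ldots,S_l$, infinitely many cyclic covers of $H(w)$ surjecting onto all of the $S_i$. The key observation is that the cyclic cover of $H(w)$ of index $n$ is precisely $\langle F_r, t^n\rangle$, and $H(w)/\langle F_r,t^n\rangle \cong C_n$ since $t$ generates the $\z$ factor; on the other hand, adding the relation $t^n$ to the presentation of $H(w)$ gives exactly the group $H_n(w)$. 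So I would first want to identify $H_n(w)$ with the quotient $H(w)/\langle\langle t^n\rangle\rangle$ by the normal closure of $t^n$, and then relate surjections of the cyclic cover $\langle F_r, t^n\rangle$ to surjections of $H_n(w)$.

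The main technical point, and the step I expect to be the real obstacle, is that Corollary 4.3 produces a surjection from the \emph{cyclic cover} $G_n = \langle F_r, t^n\rangle$ of $H(w)$ onto the $S_i$, whereas the theorem asks for a surjection from $G_n(v)$, which sits inside $H_n(w) = H(w)/\langle\langle t^n\rangle\rangle$ as $G_n(v) = \ker(H_n(w) \to C_n)$ by Lemma 5.1 and the surrounding discussion. So I need to pass from a surjection of the cyclic cover of $H(w)$ to a surjection of $G_n(v)$. The cleanest route is to revisit the proof of Proposition 4.1 / Corollary 4.2 directly in this setting: taking $N = F_r$ surjecting to $S_i$ via some $N/C_i$ with $C_i$ finite index characteristic in $F_r$, we find an integer $n_i$ (e.g. the order of the automorphism $\alpha$ induces on $F_r/C_i$) so that the surjection $F_r \to S_i$ \emph{extends} to $\langle F_r, t^{n_i}\rangle$ by killing $t^{n_i}$. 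Killing $t^{n_i}$ is exactly what passing to $H_{n_i}(w)$ does, so this extended surjection factors through $H_{n_i}(w)$ and restricts on $G_{n_i}(v) = \ker(H_{n_i}(w)\to C_{n_i})$ to a surjection onto $S_i$ (note $S_i$, being a non abelian simple group, lies in this kernel since any map $S_i \to C_{n_i}$ is trivial).

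Having done this for each $i$, I would take $n$ to be a common multiple of $n_1,\ldots,n_l$ (and of $d$, so that $G_n(v)$ is defined); by Lemma 5.1(ii) the relevant property—surjecting onto $S_i$—passes from $n_i$ to any multiple, so $G_n(v)$ surjects onto each $S_i$. To get a surjection onto all of $S_1,\ldots,S_l$ simultaneously, I would instead run the argument of Proposition 4.1 with $F = S_1 \times \cdots \times S_l$ from the start: $F_r$ surjects onto $F_2$ hence onto this product, and the same extension argument produces a single $n$ (and all its multiples) for which $G_n(v) \twoheadrightarrow S_1\times\cdots\times S_l \twoheadrightarrow S_i$ for every $i$. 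Finally, since any multiple of this $n$ also works, there are infinitely many such $n$, which gives the last sentence of the theorem.
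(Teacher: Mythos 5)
Your proposal is correct and follows essentially the same route as the paper: apply Corollaries 4.2/4.3 to $H(w)=F_r\rtimes_\alpha\z$, then observe that the surjection from the cyclic cover $\langle F_r,t^n\rangle$ produced by Proposition 4.1 kills $t^n$, hence descends to $G_n(v)$, identified as the kernel of $H_n(w)\to C_n$. One small wording slip: the parenthetical ``$S_i$ \ldots lies in this kernel'' is garbled---$S_i$ is the target, not a subgroup of $H_{n_i}(w)$---but the underlying point (that restricting the surjection to $G_{n_i}(v)$ remains onto, either because the image of $F_r$ already lands in $G_{n_i}(v)$ and hits all of $S_i$, or because a proper normal image would force $S_i$ to be a quotient of $C_{n_i}$) is sound.
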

\begin{proof}
This is an application of Corollaries 4.2 and 4.3 with $N$ equal to $F_r$ and
$G$ taken to be
$H(w)=\langle x,t\rangle=F_r\rtimes_\alpha\z$. 
We conclude that infinitely
many cyclic covers $\langle F_r,s=t^n\rangle$ of $H(w)$ surject to 
all of these finite 
simple groups. Now if we regard the $n$th cyclic cover as the kernel of the 
map $\epsilon_n(t):H(w)\rightarrow C_n$ given by the exponential sum of $t$
modulo $n$, this homomorphism will factor through $H_n(w)$ as we are just
adding the relator $t^n$. Now the kernel of this map from $H_n(w)$ to
$C_n$ is $G_n(v)$, thus on taking the $n$th cyclic cover and setting
$s$ to be the identity, the quotient so obtained is $G_n(v)$. But the
maps from our cyclic covers to finite simple groups given in
Proposition 4.1 involve setting $s=t^n$ equal to the identity,
so they factor through $G_n(v)$.
\end{proof}

Consequently in Theorem 5.2 we will have that infinitely many $G_n(v)$
are non soluble. However easy examples show that we need not have all
but finitely many $G_n(v)$ being non soluble.
\begin{ex}
Let $v\in F_4$ be $x_3x_0^{-1}$, so that $w(x,t)=t^3xt^{-3}x^{-1}$ which
is a free by cyclic word of rank 3.
Then 
\[G_n(v)=\langle x_0,x_1,\ldots ,x_{n-1},t|x_i=x_{i+3} (0\le i\le n-1)
\rangle\]
which is $F_3$ if 3 divides $n$ but $\z$ if not.
\end{ex}

We remark that free by cyclic words seem to be common but not generic:
in \cite{dlt} it was shown that the proportion of cyclically reduced words 
in $F_2$ of length $l$ that give a free by cyclic group has lim sup strictly
less than 1 but lim inf strictly bigger than 0 as $l$ tends to infinity.
However experimental evidence suggests a limit of about $0.94$. (Again
the point applies about zero exponent sum.)

We also mention that Thurston's theorem on orbifold Dehn filling can
sometimes be used in this context. If we have a free by cyclic group
$F_r\rtimes_\alpha\z=\langle F_r,t\rangle$ for $r\ge 2$
which is the fundamental group of a finite volume
orientable hyperbolic 3-manifold $M$ then $M$ is fibred over the circle
with fibre an orientable surface with boundary having fundamental  
group $F_r$. A necessary but not sufficient condition for this is that
there exists a non identity $x\in F_r$ such that some positive power of
$\alpha$ sends $x$ to a conjugate of itself.
Consequently $\pi_1(M)$ contains $\z\times\z$ and is not a word
hyperbolic group. Suppose we have $\alpha(x)=x$, which can be achieved by
first replacing $\alpha$ with a power $\beta=\alpha^k$, 
thus moving to a cyclic cover, and then
multiplying $\beta$ by an inner automorphism, preserving the group and
the manifold but changing the element $s=t^k$. Then $s$ commutes with $x$ and
so both are parabolic elements, corresponding to a torus boundary component.
On adding the relation $s^n$ to our group,
which corresponds
to gluing in a solid torus to this boundary component such that $n$ copies
of the loop on this component represented by $s$ are identified with the
compressible curve, we obtain an orbifold and Thurston's result tells us
that for all sufficiently large $n$ the result is a hyperbolic orbifold.
Thus we have that if $H(x,t)$ is a free by cyclic word of rank $r\ge 2$
that is the fundamental group of a finite volume
orientable hyperbolic 3-manifold and $t$ conjugates a non trivial element
of the free group to itself then $H_n(w)$ and $G_n(v)$ are infinite and
indeed contain non abelian free groups for all sufficiently large $n$. If
the manifold has only one cusp, as would be the case if the free group
was of rank 2, then the resulting hyperbolic orbifold is closed so that
addition of the relation $t^n$ has turned a non word hyperbolic group into    
a word hyperbolic group. However the original group is relatively hyperbolic
with respect to the boundary elements and the idea of adding a high
powered relation stems from Thurston's results in 3 dimensional hyperbolic
geometry.

Finally we finish this section with a famous example which shows that
looking at finite images cannot give the whole picture.
\begin{ex}
\end{ex}
Let $v\in F_2$ be $x_1x_0x_1^{-1}x_0^{-2}$, so that
$w=txt^{-1}xtx^{-1}t^{-1}x^{-2}$. The word $v$ really goes back to an
example of Higman, whereas $H(x,t)=\langle x,t|w(x,t)\rangle$ is known
as the Baumslag-Gersten group and it was shown in \cite{ba} that all
finite quotients of $H(x,t)$ are cyclic with $x$ mapping to the identity.
This must also be the case
for $H_n(w)=\langle x,t|w(x,t),t^n\rangle=G_n(v)\rtimes C_n$ and this
property remains true for finite index subgroups: suppose $G\le_f H$ and
all finite index subgroups of $H$ contain its commutator subgroup $H'$.
Then any finite index subgroup $L$ of $G$ is also one of $H$ and $G'\le H'$
implies that $G'\le L$. But $G_n(v)$ is a perfect group for all $n$ so has
no proper finite index subgroups at all. However Theorem 3 of \cite{prf} shows
that $G_n(v)$ and $H_n(w)$ are infinite for all $n\ge 4$.

\section{Ascending HNN extensions}

One generalisation of a semidirect product of the form $N\rtimes_\alpha\z$
is an ascending HNN extension $N*_\theta$. Whereas $\alpha$ must be an
automorphism of $N$, we only require that $\theta:N\rightarrow N$ is an
injective homomorphism, not necessarily surjective (although $N$ needs to
have proper subgroups isomorphic to itself, namely $N$ is non Hopfian, in
order for existence of a $\theta$ which is not an automorphism). If
$\langle X|R\rangle$ is a presentation for $N$ then, on taking a stable
letter $t$ we obtain the presentation $\langle X,t|R,txt^{-1}=\theta(x)
\forall x\in X\rangle$.

Ascending HNN extensions sometimes have comparable properties to semidirect
products, so we can ask whether $N$ having all finite groups as virtual
images implies the same for $N*_\theta$. In fact we can even ask the
same question for any HNN extension in which $N$ is the base. It is certainly
not true for non ascending HNN extensions, which is where we have an
isomorphism $\theta:A\rightarrow B$ of the associated subgroups $A$ and
$B$ of $N$, with both $A$ and $B$ proper subgroups.
To see this, let $N=\langle x,y|x^3=y^2\rangle$ which is the
fundamental group of the trefoil knot, thus has every finite group as a
virtual image (for a number of reasons, for instance by Corollary 3.2
as the knot is fibred). But on taking $A=\langle x\rangle$ and 
$B=\langle y\rangle$ with $\theta(x)=y$ we get
\[N*_\theta=\langle x,y,t|txt^{-1}=y,x^3=y^2\rangle\]
which on eliminating $y$ is seen to be the famous non Hopfian Baumslag
Solitar group $BS(2,3)$ whose only finite quotients are metabelian.

Unfortunately a similar phenomenon can happen for ascending HNN extensions,
as was demonstrated in \cite{spws}, where the ascending HNN extension
$\Gamma=G*_\theta$ of the Grigorchuk group $G$ formed by using the
Lysenok extension is shown to have all finite images metabelian: indeed
the image of $G$ is shown to be a quotient of $(C_2)^2$. As is
similar to the argument in Example 5.4, if a group $\Gamma$ has every finite
image metabelian then the finite residual $R_\Gamma$ (the intersection
of all finite index subgroups) would contain the second derived group
$\Gamma''$. But as $R_\Gamma=R_\Delta$ for any $\Delta\le_f\Gamma$, we
would have $\Delta''\le\Gamma''\le R_\Delta$ so all finite images of 
$\Delta$ are metabelian too. Now the Grigorchuk group $G$ is a 2-group
so certainly does not have every finite group as a virtual image: only
finite 2-groups, which must be nilpotent, can appear here. But $G$ has a
much wider range of finite images than the finite index subgroups of
$\Gamma$: if $G$ had only metabelian finite images then it would be
metabelian itself, as $G$ is residually finite so $R_G=I$. However this
is not true as $G$ is a finitely generated infinite torsion group. In
particular $\Gamma$ cannot be metabelian and so this paper gives us an
example of a non residually finite ascending HNN extension where the
base is finitely generated and residually finite. This is in contrast to
semidirect products where Malce'ev showed that if $N$ is finitely
generated then $N\rtimes H$ is residually finite if both $N$ and $H$ are
too. The proof is essentially Proposition 3.1, although we again remind
ourselves of the example in \cite{bminf} showing that this result fails
if $N$ is not finitely generated.      

This suggests that if we are given a finitely generated residually finite  
group $N$ which has every finite group as a virtual quotient then it seems
unreasonable to expect that an ascending HNN extension $N*_\theta$ will
have this property too unless we already know that the extension is
residually finite as well. One case where this has been established is in
\cite{brsp} which shows that ascending HNN extensions of free groups $F_r$
are residually finite. In contrast to the quick proof for semidirect 
products, this argument is deep and highly non trivial, involving material
in algebraic geometry (further use is made of this area, as well as some
model theory, to generalise the conclusion to ascending HNN extensions of
finitely generated linear groups). Whilst we do not invoke this theorem to 
establish that ascending HNN extensions of free groups $F_r$
have every finite
group as a virtual quotient (which we leave open), in the course of looking
for a proof we were able to come up with a considerable simplification of
the residually finite result for certain endomorphisms; those that induce
an injective map on the abelianisation of $F_r$.

To provide the necessary background, first note that 
any ascending HNN extension $\Gamma=G*_\theta$ with stable letter $t$ has
an associated homomorphism $\chi:\Gamma\rightarrow\z$ given by the
exponent sum of $t$ in an element of $\Gamma$. Now $\Gamma$ is also a
semidirect product $K\rtimes\z$ where $K=\mbox{ker}(\theta)$ but
$K=\cup_{i\in\n} t^{-i}Gt^i$ which is an ascending union, and a strictly
ascending union if $\theta$ is not surjective (which means that $K$ is
not finitely generated). Thus any element not in $K$ is preserved under some
homomorphism to a finite cyclic group. Moreover any element in $K$ is
conjugate to one in $G$, so if a conjugate of $g\in G$ is in $R_\Gamma$,
$g$ will be as well because $R_\Gamma\unlhd\Gamma$.

Consequently if $G$ is residually finite, in order
to establish residual finiteness for an ascending HNN extension
$\Gamma=G*_\theta$ we need only
consider the non identity elements $x$ of $G$ and look for some finite index 
subgroup $\Delta\le\Gamma$
with $x\notin\Delta$. We would like to use the fact that we have finite index
subgroups $H$ of $G$ with $x\notin H$, for instance we would be done if such
an $H$ somehow gave rise to a $\Delta$ satisfying $\Delta\cap G=H$.
However in the strictly ascending situation, there are severe restrictions
on which $H\le_f G$ are the intersection with $G$ of a finite index subgroup
of $\Gamma$.

\begin{prop} 
If $\Gamma=G*_\theta$ is an ascending HNN extension of a
group $G$ and $H\le_f G$ then there exists $\Delta\le_f\Gamma$
with $\Delta\cap G=H$ if and only if there is $l>0$ with $\theta^{-l}(H)=H$.
\end{prop}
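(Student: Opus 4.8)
The plan is to exploit the decomposition $\Gamma = K \rtimes \langle t\rangle$ recorded above, where $K = \ker\chi = \bigcup_{i\ge 0} t^{-i}Gt^i$. Conjugation by $t$ is an automorphism of $K$ which I will call $\alpha$; since $txt^{-1} = \theta(x)$ for $x \in G$ we have $\alpha^k|_G = \theta^k$ and $\alpha^k(G) = \theta^k(G) \subseteq G$ for every $k \ge 0$, and $\theta^{-l}(H)$ means $\{g \in G : \theta^l(g) \in H\}$ throughout.

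For the forward implication, given $\Delta \le_f \Gamma$ with $\Delta \cap G = H$, I would first note that $\langle t\rangle \cap \Delta$ has finite index in the infinite cyclic group $\langle t\rangle$, so $t^N \in \Delta$ for some $N \ge 1$, and then check $\theta^{-N}(H) = H$ directly: if $g \in G$ and $\theta^N(g) = t^N g t^{-N} \in H \subseteq \Delta$ then $g = t^{-N}(t^N g t^{-N})t^N \in \Delta \cap G = H$; conversely for $g \in H \subseteq \Delta$ the element $\theta^N(g) = t^N g t^{-N}$ lies in $\Delta$ and in $\theta^N(G)\subseteq G$, so in $\Delta \cap G = H$. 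This half is essentially a bookkeeping exercise and should cause no trouble.

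For the converse, suppose $\theta^{-l}(H) = H$. This already forces $\theta^l(H)\subseteq H$ (each $h \in H = \theta^{-l}(H)$ satisfies $\theta^l(h)\in H$), so the subgroups $\alpha^{-li}(H) \le K$ increase with $i \ge 0$ and I set $\hat H = \bigcup_{i\ge 0}\alpha^{-li}(H)$. Then $\alpha^l(\hat H) = \hat H$, because $\alpha^l$ shifts the chain down by one and $\alpha^l(H) = \theta^l(H) \subseteq H \subseteq \hat H$. Next $\hat H \cap G = H$: one inclusion is trivial, and if $g \in \hat H \cap G$ then $\theta^{li}(g) = \alpha^{li}(g) \in H$ for some $i$, so applying $\theta^{-l}(H) = H$ exactly $i$ times pulls $g$ back into $H$. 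I then put $\Delta = \langle \hat H, t^l\rangle$; using the $\alpha^l$-invariance of $\hat H$ one verifies $\Delta = \{h t^{lj}: h \in \hat H,\ j \in \z\}$, and since $G \le \ker\chi$ this gives $\Delta \cap G = \hat H \cap G = H$.

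The one point needing genuine care is that $\Delta$ has finite index in $\Gamma$, equivalently that $[K:\hat H] < \infty$. Here I would argue that the automorphism $\alpha^{-li}$ of $K$ carries the pair $H \le G$ to $\alpha^{-li}(H) \le \alpha^{-li}(G)$ preserving the index $d := [G:H]$; that the chain $\{\alpha^{-li}(G)\}_{i\ge 0}$ is cofinal in $\{t^{-j}Gt^j\}_{j\ge 0}$ (since $\theta^{j}(G)\subseteq G$ makes $t^{-j}Gt^j \subseteq t^{-li}Gt^{li}$ once $li \ge j$), so that $K = \bigcup_i \alpha^{-li}(G)$; and hence that $[K:\hat H]$ is the increasing limit of the numbers $[\alpha^{-li}(G) : \hat H \cap \alpha^{-li}(G)]$, each of which divides $d$. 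Therefore $[K:\hat H] \le d$ and $[\Gamma:\Delta] = l\,[K:\hat H] \le l\,[G:H] < \infty$, completing the construction.
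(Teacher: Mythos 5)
Your proof is correct, and its overall shape is the same as the paper's: the forward direction is essentially the paper's argument verbatim, and in the converse direction you construct the same subgroup $\Delta$ (your $\langle\hat H,t^l\rangle$ equals the paper's $\langle H,t^l\rangle$, since $\hat H$ is exactly the subgroup generated by the $t^{-l}$-conjugates of $H$). Where you genuinely diverge is on the claim that $\Delta$ has finite index. The paper disposes of this by citing its own earlier result \cite{bis}, Proposition 4.3(iv), which says $H\le_f G$ implies $\langle H,t\rangle\le_f\Gamma$, and then applies it inside the cyclic cover $\Gamma_l$; you instead prove it from scratch by working in the decomposition $\Gamma=K\rtimes\langle t\rangle$, writing $K=\bigcup_i\alpha^{-li}(G)$ and $\hat H=\bigcup_i\alpha^{-li}(H)$, and bounding $[K:\hat H]$ by $[G:H]$ via the non-decreasing, $d$-bounded sequence $[\alpha^{-li}(G):\hat H\cap\alpha^{-li}(G)]$, whose eventual value is $[K:\hat H]$. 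This makes the argument self-contained, at the cost of a little care with the direct limit of indices, which you handle correctly. Finally, the paper computes $\Delta\cap G$ via the normal form $s^{-p}hs^q$ of the ascending HNN extension $\Delta=H*_{\theta^l}$, while you use the coset form $ht^{lj}$ with $h\in\hat H$ in the semidirect product $\Delta=\hat H\rtimes\langle t^l\rangle$; these are equivalent, but your version avoids having to observe that $\Delta$ is itself an ascending HNN extension of $H$.
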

\begin{proof}
Suppose on being given $H$ we have such a $\Delta$. Being of finite index
implies there is $l>0$ with $t^l\in\Delta$. As any $h\in H$ is in $\Delta$,
we have that $t^lht^{-l}=\theta^l(h)$ is in $\Delta$ but also in $G$, so
$\theta^l(H)\le H$, implying $H\le\theta^{-l}(H)$.
Now take $g\in\theta^{-l}(H)$, so that $g\in G$ of course. 
We have $g=t^{-l}h_0t^l$ for some $h_0\in H$, and $t^l,h_0\in\Delta$ implies
that $g$ is too, thus $g\in H$. 

Conversely it is shown in
\cite{bis} Proposition 4.3 (iv) by a short but careful argument that if
$H$ is any finite index subgroup of $G$ then $\langle H,t\rangle\le_f\Gamma$.
Now, just as for semidirect products over $\z$, we have cyclic covers
$\Gamma_n=\langle G,t^n\rangle$ of $\Gamma$ which are themselves 
ascending HNN extensions with $s=t^n$ as stable letter, formed by using
the endomorphism $\theta^n$. Thus if we have $H=\theta^{-l}(H)=
\theta^{-2l}(H)=\theta^{-3l}(H)=\ldots$ then $\Delta=\langle H,s=t^l\rangle
\le_f\Gamma_l=\langle G,s\rangle\le_f\Gamma$. It is clear that 
$H\le\Delta\cap G$ so let $g\in\Delta\cap G$. As $\theta^l(H)\le H$, we have
that $\Delta$ is also an ascending HNN extension with stable letter $s$, by
restricting $\theta^l$ to $H$. This means that any element of $\Delta$ can
be expressed in the form $s^{-p}hs^q$ for $p,q\ge 0$ and $h\in H$. Now if
$g=s^{-p}hs^q$ then we must have $p=q$ as $g$ is in the kernel of the
associated homomorphism (which is just restriction to $\Delta$ of that for
$\Gamma$). Thus $\theta^{pl}(g)=s^pgs^{-p}\in H$, meaning that 
$g\in\theta^{-pl}(H)=H$.
\end{proof}

As for finding such subgroups which are invariant under pullback by (a power
of) $\theta$, it is shown in \cite{bis} Theorem 4.4 that if $G$ is
finitely generated (which henceforth we will assume) then on
repeatedly pulling back $H$ via $\theta$, we obtain $\theta^{-k}(H)=
\theta^{-k-l}(H)$ for some $k\ge 0$ and $l>0$. This means that on setting
$L=\theta^{-k}(H)$ we have $\theta^{-l}(L)=L$. However it could well be
that we find $L$ is all of $G$ anyway. What is required is a good supply
of fully invariant subgroups, meaning that $\theta(L)\le L$ for any 
endomorphism $\theta$, which implies that $L$ is contained in 
$\theta^{-1}(L)$.

Now further suppose that $L$ has finite index in $G$. In this case we 
would have $L=\theta^{-1}(L)$ if and only if $[G:L]=[G:\theta^{-1}(L)]$.
In fact this happens if and only if $\theta(G)L=G$. This follows because
the right hand side is equal to
$[\theta^{-1}\theta(G):
\theta^{-1}(L\cap\theta(G)]$, and as $\theta(G)$ and 
$L\cap\theta(G)$ are obviously
in the image of $\theta$, this index is preserved on
removing $\theta^{-1}$ to get $[\theta(G):L\cap\theta(G)]=[\theta(G)L:L]$.

Possibilities for these fully invariant subgroups are, given a prime $p$,
the derived $p$-series and the lower central $p$-series, both of which
intersect in the identity in the case of a free group
$F_r$ and have first term $F_r^p[F_r,F_r]$
with quotient $(C_p)^r$. 

\begin{thm} If $F_r$ is the free group of rank $r\ge 2$ and $\theta$ is an
injective endomorphism of $F_r$ then consider the induced homomorphism
of abelianisations
$\overline{\theta}:\z^r\rightarrow\z^r$ given by $\theta(x)[F_r,F_r]=
\overline{\theta}(x[F_r,F_r])$. If det$(\overline{\theta})\ne 0$ then the
ascending HNN extension $F_r*_\theta$ is residually finite.
\end{thm}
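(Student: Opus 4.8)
The plan is to apply, with $G=F_r$, the reduction already carried out above for a general ascending HNN extension $G*_\theta$ of a residually finite group $G$, and to supply the finite index subgroups demanded by Proposition 6.1 from a descending chain of fully invariant subgroups of $F_r$. Write $\chi:\Gamma\rightarrow\z$ for the exponent-sum homomorphism of $\Gamma=F_r*_\theta$ and $K=\ker(\chi)=\cup_{i\in\n}t^{-i}F_rt^i$. Any element of $\Gamma$ outside $K$ survives in a finite cyclic quotient, and any element of $K$ is conjugate in $\Gamma$ to an element of $F_r$; since the finite residual $R_\Gamma$ is normal in $\Gamma$, it therefore suffices to show that every non-trivial $g\in F_r$ lies outside some finite index subgroup of $\Gamma$.

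Given such a $g$ I would produce the separating subgroup as follows. Choose a prime $p$ not dividing $\det(\overline{\theta})$, of which there are infinitely many, and let $D_0=F_r$ and $D_{i+1}=D_i^p[D_i,D_i]$ be the derived $p$-series of $F_r$ (the lower central $p$-series would serve equally well). Each $D_i$ is fully invariant in $F_r$, has finite index, and $F_r/D_i$ is a finite $p$-group with Frattini quotient $F_r/D_1\cong(C_p)^r$. As $D_i$ is fully invariant, $\theta(D_i)\le D_i$, so $\theta$ induces an endomorphism of $F_r/D_i$; the endomorphism it induces on the Frattini quotient is $\overline{\theta}$ reduced modulo $p$, which is invertible precisely because $p\nmid\det(\overline{\theta})$. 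A surjection onto the Frattini quotient forces the endomorphism of $F_r/D_i$ to be surjective, the Frattini subgroup being the set of non-generators, and hence, since $F_r/D_i$ is finite, an automorphism. Therefore $\theta^{-1}(D_i)=D_i$, and Proposition 6.1 applied with $l=1$ yields $\Delta_i\le_f\Gamma$ with $\Delta_i\cap F_r=D_i$.

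To conclude, $F_r$ is residually finite $p$, so $\cap_i D_i=\{e\}$; thus any non-trivial $g\in F_r$ lies outside some $D_i=\Delta_i\cap F_r$, hence outside $\Delta_i$, which contains $R_\Gamma$, and so $g\notin R_\Gamma$. With the reduction this gives $R_\Gamma=\{e\}$, i.e.\ $\Gamma$ is residually finite. I expect the only genuinely delicate step to be the passage from ``$\overline{\theta}$ is invertible on the $r$-dimensional $\mathbb F_p$-space $F_r/D_1$'' to ``$\theta$ induces an automorphism of the finite $p$-group $F_r/D_i$'', which rests on the non-generator property of the Frattini subgroup and on the finiteness of these quotients; the rest is the reduction already in hand together with a direct appeal to Proposition 6.1. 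Moreover, since the only features of $F_r$ used are that the terms of a $p$-series are fully invariant, have finite $p$-group quotients and intersect in the identity, the same argument proves residual finiteness of $N*_\theta$ for any finitely generated group $N$ having a prime $p$ such that $N$ is residually finite $p$ and the homomorphism induced by $\theta$ on the $p$-abelianisation $N/N^p[N,N]$ is invertible.
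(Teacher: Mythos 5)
Your proof is correct and takes essentially the same route as the paper: choose a prime $p$ not dividing $\det(\overline{\theta})$, run the reduction through Proposition 6.1, use the derived (or lower central) $p$-series $D_i$ of $F_r$, and invoke the Frattini argument on the finite $p$-group $F_r/D_i$ to establish that $\theta$ induces a surjective endomorphism. The only slight difference is the final step: where the paper establishes $\theta^{-1}(D_i)=D_i$ by first passing through the equivalence $[G:L]=[G:\theta^{-1}(L)]\iff\theta(G)L=G$ set up in the paragraph preceding the theorem, you observe directly that a surjective endomorphism of a finite group is an automorphism, hence injective, which gives $\theta^{-1}(D_i)\le D_i$ and thus equality. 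This is a modest streamlining of the same idea rather than a different argument, and your closing remark about the generalisation to residually finite $p$ groups matches the paper's Corollary 6.3.
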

\begin{proof}
Given any prime $p$, we can consider the endomorphism $\overline{\theta}_p$
of $(C_p)^r$ by reducing $\overline{\theta}$ 
mod $p$. If det$(\overline{\theta})\ne 0$
when considered as an endomorphism of $\z^r$ then, by taking a prime $p$
which does not divide det$(\overline{\theta})$ we have that 
$\overline{\theta}_p$ is invertible.

Thus in the case where $G$ is the free group $F_r$,
on being given a non identity element $x$ of $G$ we choose a term
$L_i$ of the derived or other appropriate $p$-series for $F_r$ where
$x\notin L_i$. Then $\theta(L_i)\le L_i\unlhd_fG$, allowing us to take the
finite index subgroup $S_i=\langle L_i,t\rangle$ of $\Gamma$. We are
done if we can show $\theta(G)L_i=G$ because then we would have 
$\theta^{-1}(L_i)=L_i$ by the above, so we can apply Proposition 6.1 to
conclude that $S_i\cap G=L_i$ and $x\notin S_i$. Now $\theta(G)L_i=G$
if and only if $\theta(G)L_i/L_i=G/L_i$ but $\theta(G)L_i/L_i$
is the image of
$\theta(G)$ under the quotient map $q_i$ from $G$ to $G/L_i$.

We certainly have that $q_i(\theta(G))=G/L_i$ if $i=1$ in which case
$L_1=G^p[G,G]$, since by our assumption on $p$ we have that $q_1\theta(G)
=\overline{\theta}_p((C_p)^r)$
is all of $(C_p)^r=G/L_1$. However as $G/L_i$ is a finite $p$-group, we
can utilise the Frattini subgroup (intersection of all maximal subgroups).
If this subgroup is finitely generated then a set generates the
whole group
if and only if it generates the group when quotiented by the Frattini
subgroup.  Now in the case of a finite $p$-group $P$, the Frattini
subgroup is $P^p[P,P]$. Thus for any $i$ we have that the Frattini
subgroup of $P=G/L_i$ is the image of $G^p[G,G]$ under $q_i$, so the
quotient of $G/L_i$ by this subgroup is $G/(G^p[G,G]L_i)$. But
$L_i\le G^p[G,G]=L_1$ and so the image of $\theta(G)$ in $G/L_i$ is
all of $G/L_i$.
\end{proof}

We have written out this proof so that it applies in more general
situations:
\begin{co}
If $G$ is any finitely generated group which is residually finite $p$
and $\theta$ is an injective endomorphism of $G$ then the
associated HNN extension $G*_\theta$ is residually finite provided that
the induced endomorphism of $G/G^p[G,G]$ is invertible.
\end{co}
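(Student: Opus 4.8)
The plan is to follow the proof of Theorem 6.3 essentially verbatim, checking that nothing used there was special to the free group $F_r$ beyond the two properties now hypothesised: that $G$ is residually finite $p$, and that the induced endomorphism $\overline{\theta}_p$ of $G/G^p[G,G]$ is invertible. First I would recall from the discussion preceding Theorem 6.3 that $\Gamma=G*_\theta$ has an associated homomorphism $\chi:\Gamma\to\z$ (exponent sum of $t$), that every element outside $K=\ker\chi$ survives to a finite cyclic quotient, and that every element of $K$ is conjugate into $G$; since $R_\Gamma\unlhd\Gamma$, it therefore suffices to separate each non-identity $x\in G$ from $R_\Gamma$ using a finite index subgroup of $\Gamma$.

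Next, given such an $x$, I would use residual finiteness $p$ of $G$ to choose a finite index normal subgroup $N\unlhd_f G$ with $G/N$ a finite $p$-group and $x\notin N$. The subtle point is that the subgroup I actually feed into Proposition 6.1 must be fully invariant (so that $\theta$ maps it into itself) and must also satisfy $\theta(G)\cdot(\text{that subgroup})=G$. The natural candidate is a term of the derived $p$-series or lower central $p$-series of $G$; these are fully invariant, so $\theta(L_i)\le L_i\unlhd_f G$, and since $G$ is residually finite $p$ one can arrange $x\notin L_i$ by taking $i$ large enough. Then set $S_i=\langle L_i,t\rangle\le_f\Gamma$ (finite index by \cite{bis} Proposition 4.3(iv), exactly as in Theorem 6.3). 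To invoke Proposition 6.1 and conclude $S_i\cap G=L_i$, hence $x\notin S_i$, I need $\theta^{-1}(L_i)=L_i$, which by the index computation in the excerpt is equivalent to $\theta(G)L_i=G$.

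Verifying $\theta(G)L_i=G$ is the heart of the matter and proceeds exactly as in Theorem 6.3 via the Frattini subgroup. Working in the finite $p$-group $P=G/L_i$, whose Frattini subgroup is $P^p[P,P]$, a generating set of $P$ is detected modulo the Frattini subgroup; and $L_i\le G^p[G,G]$ for every term of either $p$-series, so $G/(G^p[G,G]L_i)=G/G^p[G,G]$. By hypothesis $\overline{\theta}_p$ is invertible on $G/G^p[G,G]$, so the image of $\theta(G)$ in this quotient is everything, whence the image of $\theta(G)$ in $P=G/L_i$ generates $P$, i.e. $\theta(G)L_i=G$. That closes the argument. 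The main obstacle — and the only real content beyond rewriting Theorem 6.3 — is confirming that a chain of fully invariant finite index $p$-subgroups of $G$ intersecting to the identity genuinely exists under the bare hypothesis ``residually finite $p$'': here one just takes the terms $L_i$ of the derived $p$-series (or lower central $p$-series), which are fully invariant with $L_1=G^p[G,G]$ and, by residual finiteness $p$, intersect in the identity, so every non-identity $x$ is excluded from some $L_i$; everything else is routine and identical to the free case.
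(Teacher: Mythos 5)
Your proof is correct and follows the same route as the paper: the paper's own proof is a one-liner noting that residual finiteness $p$ is equivalent to the derived (or lower central) $p$-series intersecting in the identity and that ``the proof proceeds as before'', and you have simply written out the details of why the Frattini argument from Theorem 6.2 transfers, which is exactly what is intended.
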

\begin{proof}
The residually finite $p$ condition is equivalent to the derived or
lower central $p$-series intersecting in the identity. Now the proof
proceeds as before and the invertible condition is used to invoke the
Frattini argument at the end.
\end{proof}

There are a range of groups which are residually finite $p$, for instance
any finitely generated linear group in characteristic 0 is virtually
residually finite $p$ for all but finitely many primes $p$ (again due
to Malce'ev), thus we can find examples amongst the finite index subgroups
of any such linear group. We remark though that a necessary condition
for a non-cyclic
finitely generated group $G$ to be residually finite $p$ is that
$G$ surjects to $C_p\times C_p$, as otherwise all finite $p$-images of
$G$ are cyclic which would imply in this case that $G$ was too.\\
\hfill\\
Example: The finitely presented
ascending HNN extension $\Gamma=G*_\theta$ of the Grigorchuk group
$G=\langle a,c,d\rangle$ which is shown to be non residually finite
in \cite{spws} is formed using the injective endomorphism $\sigma$,
where $\sigma(a)=aca,\sigma(c)=dc,\sigma(d)=c$. As $G/G^2[G,G]=(C_2)^3$,
generated by the images of $a,c,d$, we see that the induced homomorphism
on $G/G^2[G,G]$ is not injective, with $ad$ in the kernel. (If it were
then Corollary 6.3 would give us the first example of a finitely presented,
residually finite group which is not virtually soluble nor contains $F_2$.
This is because $G$ is a $2$-group and residually finite, so residually
finite 2.)

In fact we can adjust $\Gamma$ slightly to come up with a group which is
``even less residually finite''. Any ascending
HNN extension must surject to $\z$
and so have some finite index subgroups, namely the cyclic covers. However
here we have an example where these are all the finite index subgroups, even
though the base is finitely generated and residually finite.
\begin{prop}
Let $G$ be the Grigorchuk group and $[G,G]$ its commutator subgroup
of index 8. Then the only finite index subgroups of 
the ascending HNN extension $\Delta=[G,G]*_\sigma$, where
we restrict $\sigma$ to $[G,G]$, are the cyclic covers.
\end{prop}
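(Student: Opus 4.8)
The plan is to realise $\Delta=[G,G]*_\sigma$ as a finite-index subgroup of the ambient extension $\Gamma=G*_\sigma$ and then to invoke the fact, recalled above from \cite{spws}, that in every finite quotient of $\Gamma$ the image of $G$ is a quotient of $(C_2)^2$, so that $[G,G]$ maps to the identity. First I would check that the natural homomorphism $[G,G]*_\sigma\to\Gamma$ which is the inclusion on $[G,G]$ and carries the stable letter of $\Delta$ to that of $\Gamma$ is injective: any element of $[G,G]*_\sigma$ can be written as $t^{-i}bt^j$ with $b\in[G,G]$ and $i,j\ge 0$, and such an element maps to $1$ in $\Gamma$ only if $i=j$ (compare exponent sums of $t$), which then forces $b=1$. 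Hence $\Delta$ is identified with the subgroup $\langle [G,G],t\rangle$ of $\Gamma$, and as $[G,G]$ has index $8$ in $G$, this subgroup has finite index in $\Gamma$ by \cite{bis} Proposition 4.3 (iv).

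Next I would isolate the relevant normal subgroup. Put $B=[G,G]$ and let $L$ be the normal closure of $B$ in $\Gamma$; since $tBt^{-1}=\sigma(B)\le B$ this is the ascending union $L=\bigcup_{n\ge 0}t^{-n}Bt^n$, so $L\le\Delta$ and $L$ is also the normal closure of $B$ in $\Delta$. Quotienting by $L$ leaves a group generated by the image of $t$, and since the exponent-sum map $\chi:\Delta\to\z$ kills $B$ (hence $L$) and is onto, $\Delta/L\cong\z$. I would then note that for $m\ge 1$ the preimage $\chi^{-1}(m\z)$ equals $\langle B,t^m\rangle$ — one uses $L\le\langle B,t^m\rangle$, which holds because $t^{-n}Bt^n=t^{-qm}\sigma^{qm-n}(B)t^{qm}\le t^{-qm}Bt^{qm}$ for $q=\lceil n/m\rceil$ — and that these groups $\langle B,t^m\rangle$ are exactly the cyclic covers of $\Delta$; together with $L$ itself (of infinite index) they are all the subgroups of $\Delta$ containing $L$.

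The conclusion is then quick. Given any finite-index subgroup $\Lambda$ of $\Delta$, it has finite index in $\Gamma$ as well, so its core $\Lambda_0=\bigcap_{g\in\Gamma}g\Lambda g^{-1}$ is a finite-index normal subgroup of $\Gamma$. By \cite{spws} the image of $G$ in the finite group $\Gamma/\Lambda_0$ is abelian, hence $B=[G,G]\le\Lambda_0$, and therefore $L\le\Lambda_0\le\Lambda$. So $\Lambda$ contains $L$, and being of finite index it must be $\langle B,t^m\rangle$ for some $m\ge 1$, that is, a cyclic cover of $\Delta$. I expect the one real idea here to be the opening step, namely recognising that $\Delta$ sits inside $\Gamma$ with finite index (which is what makes \cite{spws} applicable); that cited result — that every finite image of $\Gamma$ kills $[G,G]$ — is the genuinely deep ingredient, used as a black box, and the remaining work is elementary bookkeeping about the subgroups above $L$, where the only point needing attention is that $L$ itself, of infinite index, is the sole exception to their all being cyclic covers.
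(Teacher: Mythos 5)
Your proof is correct and follows essentially the same route as the paper's: both pass from a finite-index subgroup of $\Delta$ to a finite-index normal subgroup of $\Gamma$ (you via the core in $\Gamma$, the paper by choosing $M\unlhd_f\Gamma$ inside a finite-index normal subgroup of $\Delta$), invoke \cite{spws} to force $[G,G]$ to die in the finite quotient, and conclude that every finite-index subgroup of $\Delta$ contains the kernel $L$ of the associated homomorphism and is therefore a cyclic cover. You merely spell out more of the routine verifications that the paper leaves implicit, such as the Britton's-lemma injectivity of $\Delta\hookrightarrow\Gamma$ and the explicit identification of the subgroups of $\Delta$ containing $L$.
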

\begin{proof}
We have that $\Delta$ is a finite index subgroup of $\Gamma=G*_\sigma$
by \cite{bis} Proposition 4.3 (iv); in fact it can be checked that
$\Delta$ has index 4. Moreover it is shown in \cite{spws} that
in any finite quotient of $\Gamma$, the base $G$ maps to an abelian subgroup
and so $[G,G]$ maps to the identity. Suppose there exists a finite index
normal subgroup $N$ of $\Delta$ such that the image of $[G,G]$ is non trivial
in $\Delta/N$. Although $N$ need not be normal in $\Gamma$, we can find
$M\unlhd_f\Gamma$ with $M\le N$, so that the image of $[G,G]$ is non trivial
in $\Gamma/M$ which is a contradiction.

This means that every finite index normal subgroup of $\Delta$ contains 
$[G,G]$ and hence also the kernel of the associated homomorphism for
$\Delta=[G,G]*_\sigma$, thus the only finite quotients of $\Delta$ are
cyclic and the only finite index subgroups are the cyclic covers.
\end{proof}
Example: In \cite{spdt} the injective endomorphism $\theta(a)=b,
\theta(b)=a^2$ of the rank two free group $F(a,b)$ is considered. The
resulting ascending HNN extension $F(a,b)*_\theta$ is shown to be a
1-relator group $\langle a,t|t^2at^{-2}=a^2\rangle$ which is non linear
(by using results of Wehrfritz) but residually finite (using \cite{brsp}).
Here we see the conditions in Theorem 6.2 are satisfied because
det$(\overline{\theta})=-2$, 
so we can use any prime but 2 to complete an
proof that $F(a,b)*_\theta$ has these properties without recourse to the
sophisticated results in \cite{brsp}.


\begin{thebibliography}{99}

\bibitem{ba} G.\,Baumslag,
{\it A non-cyclic one-relator group all of whose finite quotients are cyclic},
J. Austral. Math. Soc. {\bf 10} (1969) 497--498.

\bibitem{bminf} G.\,Baumslag,
{\it A non-cyclic, locally free, free-by-cyclic group all of whose finite
factors are cyclic},
Bull. Austral. Math. Soc. {\bf 6} (1972) 313--314.

\bibitem{belos} I.\,Belegradek and D.\,V.\,Osin,
{\it Rips construction and Kazhdan property (T)},
Groups Geom. Dyn. {\bf 2} (2008) 1--12.

\bibitem{brsp} A.\,Borisov and M.\,Sapir,
{\it Polynomial maps over finite fields and residual finiteness of mapping
tori of group endomorphisms},
Invent. Math. {\bf 160} (2005) 341--356.

\bibitem{burks} R.\,G.\,Burns, A.\,Karrass and D.\,Solitar:
{\it A note on groups with separable finitely generated subgroups},
Bull. Austral. Math. Soc. {\bf 36} (1987) 153--160.


\bibitem{bis} J.\,O.\,Button,
{\it Large groups of deficiency 1},
Israel J. Math. {\bf 167} (2008) 111--140.

\bibitem{cmt} M.\,Conder, G.\,Martin and A.\,Torstensson,
{\it Maximal symmetry groups of hyperbolic 3-manifolds},
New Zealand J. Math. {\bf 35} (2006) 37--62.

\bibitem{c} J.\,E.\,Cremona, {\it Unimodular integer circulants},
Math. Comp. {\bf 77} (2008) 1639-1652.

\bibitem{ce} J.\,E.\,Cremona and M.\,Edjvet,
{\it Cyclically presented groups and resultants},
Internat. J. Algebra Comput. {\bf 20} (2010) 417--435.

\bibitem{dlt} N.\,M.\,Dunfield and D.\,P.\,Thurston,
{\it A random tunnel number one 3-manifold does not fiber over the
circle},
Geom. Topol. {\bf 10} (2006) 2431--2499.

\bibitem{dt} N.\,M.\,Dunfield and W.\,P.\,Thurston,
{\it Finite covers of random 3-manifolds},
Invent. Math. {\bf 166} (2006) 457--521.

\bibitem{spdt} C.\,Dru\c tu and M.\,Sapir,
{\it Non-linear residually finite groups},
J. Algebra {\bf 284} (2005) 174--178.

\bibitem{lac} M.\,Lackenby, 
{\it Adding high powered relations to large
groups}, Math. Res. Lett. {\bf 14} (2007) 983--993.

\bibitem{lr} D.\,D.\,Long and A.\,W.\,Reid,
{\it Subgroup separability and virtual retractions of groups},
Topology {\bf 47} (2008) 137--159.

\bibitem{lubseg} A.\,Lubotzky and D.\,Segal, {\it Subgroup growth}.
Progress in Mathematics 212, Birkha\"user Verlag, Basel, 2003.


\bibitem{LS} R.\,C.\,Lyndon and P.\,E.\,Schupp,
{\it Combinatorial Group Theory}. Springer-Verlag, Berlin-Heidelberg-New
York 1977.

\bibitem{ny} New York Group Theory Cooperative,
Open problems in combinatorial and geometric group theory. Available at\\
\texttt{http://www.sci.ccny.cuny.edu/\textasciitilde
shpil/gworld/problems/\\oproblems.html}

\bibitem{Olhp} A.\,Yu.\,Ol'shanski\u\i, 
{\it On residualing homomorphisms and $G$-subgroups of hyperbolic groups},
Internat. J. Algebra Comput. {\bf 3} (1993) 365--409.

\bibitem{olsmp} A.\,Yu.\,Ol'shanski\u\i,
{\it On the Bass-Lubotzky question about quotients of finite groups},
J. Algebra {\bf 226} (2000) 807--817.

\bibitem{olos} A.\,Yu.\,Ol'shanski\u{i}  and D.\,V.\,Osin,
{\it Large groups and their periodic quotients},
Proc. Amer. Math. Soc. {\bf 136} (2008) 753--759.

\bibitem{pr} S.\,J.\,Pride,
{\it The concept of ``largeness'' in group theory}, in
Word problems (II), Stud. Logic Foundations Math. {\bf 95}, North-Holland,
Amsterdam-New York, 1980, pp.\,299--335.

\bibitem{prf} S.\,J.\,Pride, 
{\it Groups with presentations in which each
defining relator involves exactly two generators},
J. London Math. Soc. {\bf 36} (1987) 245--256.

\bibitem{spws} M.\,Sapir and D.\,T.\,Wise,
{\it Ascending HNN extensions of residually finite groups can be non-Hopfian
and can have very few finite quotients}, J. Pure Appl. Algebra {\bf 166}
(2002) 191--202.

\end{thebibliography}
\end{document}